\begin{document}

\title{Conformal Holonomy of the Bivariate Gaussian Manifold}
\author{James A. Reid}
\address[James A. Reid]{Darkocean, Office 9, Building 2, Financial Square, Doha, Qatar}
\email{j.reid.06@aberdeen.ac.uk}
\date{\today}

\newcommand{\onetoone}{\overset{1:1}{\iff}}
\newcommand{\ud}{\mathrm{d}}
\newcommand{\wt}{\widetilde}
\newcommand{\wh}{\widehat}
\newcommand{\tr}{\text{tr~}}
\newcommand{\p}{\phantom}
\newcommand{\la}{\langle}
\newcommand{\ra}{\rangle}
\newcommand{\wb}{\overline}
\newcommand{\contract}{\lrcorner}
\newcommand{\dbar}{\overline{\partial}}
\newcommand{\head}[1]{\textnormal{\textbf{#1}}}
\newcommand{\poincare}{Poincar\'{e}}
\newcommand{\pe}{Poincar\'{e}-Einstein }
\newcommand{\poinein}{Poincar\'{e}-Einstein}
\newcommand{\zeroinfty}{[0, \infty)}
\newcommand{\tickbox}{\makebox[0pt][l]{$\square$}\raisebox{.15ex}{\hspace{0.1em}$\checkmark$}}
\newcommand{\longhookrightarrow}{\lhook\joinrel\longrightarrow}

\newtheorem{definition}{Definition}
\newtheorem{lemma}{Lemma}
\newtheorem{theorem}{Theorem}
\newtheorem{corollary}{Corollary}
\newtheorem{calculation}{Calculation}
\newtheorem{proposition}{Proposition}
\newtheorem{fact}{Fact}
\newtheorem{mainresult}{Main Result}
\newtheorem{identity}{Identity}
\newtheorem{speculation}{Speculation}

\newcommand{\cone}{\mbox{\textcircled{\small 1}}}
\newcommand{\ctwo}{\mbox{\textcircled{\small 2}}}
\newcommand{\cthree}{\mbox{\textcircled{\small 3}}}
\newcommand{\cfour}{\mbox{\textcircled{\small 4}}}
\newcommand{\cfive}{\mbox{\textcircled{\small 5}}}

\newcommand{\circex}{\mbox{\textcircled{\small \boldsymbol{!}}}}
\newcommand{\circa}{\mbox{\textcircled{\small a}}}
\newcommand{\circb}{\mbox{\textcircled{\small b}}}
\newcommand{\circc}{\mbox{\textcircled{\small c}}}
\newcommand{\circd}{\mbox{\textcircled{\small d}}}

\begin{abstract}
Statistical manifolds, the parameter spaces of smooth families of probability density functions, are the central objects of study in information geometry.  While the elementary differential geometry of Riemannian statistical manifolds is well-known, their conformal geometry remains entirely unexplored.  In this article, we begin this programme of exploration by determining some invariants of the conformal structure of the Fisher-Rao metric.  Specifically, we study the holonomy of a conformally-invariant connection on the standard tractor bundle of the bivariate Gaussian manifold.  It is found that for a generic pair of random variables, the conformal holonomy group is the identity-connected component of the indefinite special orthogonal group, $SO^{0}(1,6)$.  Remarkably, however, when the random variables are statistically independent, the conformal holonomy representation is reducible and the conformal holonomy group is $SO^{0}(1,4)$.\\ \\   
{\it Mathematics Subject Classification 2020}: 53B12, 53C18, 53C25, 53C29 and 62H10.\\
{\it Keywords}: Normal distribution manifolds, conformal holonomy, Einstein metrics.
\end{abstract}
\maketitle
%
%

\section{Information geometry}
\subsection{Preliminaries}
To preface our exposition, we recall some facts about the differential geometry of statistical manifolds, with a focus on the bivariate Gaussian manifold.  Our review follows \cite{arwini} and \cite{li}, but is not exhaustive.  Let    $x$ be a random variable in a parameter space $\Theta$.  A parametric statistical model $S$ is an $n$-dimensional smooth family $\left\{ p_{\theta}~|~\theta\in\Theta\right\}$ of probability density functions $p\left(x;\theta\right)$ on an event space $\Omega$ of unit measure. The $n$-tuple $\theta=\left(\theta_{1},\theta_{2},\dots,\theta_{n}\right)\in\Theta \subset \mathbb{R}^{n}$ furnishes a set of natural coordinates on $S$, which is endowed with a differentiable structure, so we equivalently say that $S$ is a statistical manifold. Continuing, we say that an $n$-dimensional parametric statistical model $\left\{ p_{\theta}~|~\theta\in\Theta\right\}$ is an exponential family if the probability density function can be expressed in terms of functions $C,F_{1},\dots,F_{n}$ and a convex function $\varphi$ on $\Theta$ of the form
\begin{equation}
p(x; \theta) = \exp \left\{ C(x) + \Sigma_{a}\theta_{a} F_{a}(x) - \varphi(\theta) \right\}.
\end{equation}
Here, $\left\{ \theta_{a}\right\}$ are called the natural parameters and $\varphi$ is the potential function.  Given a statistical manifold $S$ with log-likelihood function $l:=l(x;\theta)=\log p(x;\theta)$, and denoting differentiation with respect to the natural parameters by $\partial_{a} := \partial / \partial\theta_{a}$, one may define a Riemannian metric on $S$ as
\begin{equation}\label{eq:fisherrao}
g_{ab}(\theta):= \mathbb{E}[(\partial_{a}l)(\partial_{b}l)]=\int \ud x~ p(x;\theta) (\partial_{a}l)(\partial_{b}l),
\end{equation}
where $a,b=1,\ldots,n$, and $\mathbb{E}$ denotes the expectation value.  Equation \eqref{eq:fisherrao} is the Fisher-Rao information metric which on exponential families reduces to the form $g_{ab}=\partial_a\partial_b\varphi$. Adopting the Einstein summation convention on repeated covariant and contravariant indices, the Christoffel symbol of the Levi-\v{C}ivit\`{a} connection $\nabla$ associated to the Fisher-Rao metric $g$ are given by
\begin{equation*}
\Gamma^{c}_{\p{c}ab} := \frac{1}{2} g^{cd} \left( \partial_{a} g_{bd} + \partial_{b} g_{ad} - \partial_{d} g_{ab} \right) = \varphi^{cd}(\theta)\varphi_{abd}(\theta).
\end{equation*}
We now consider one of the central motifs of this article, the bivariate Gaussian manifold.  Following \cite{li}. let $\text{PD}(d,\mathbb{R})$ be the set of all real $d$-ordered positive definite symmetric matrices.  The $d$-dimensional normal distribution manifold is defined by:
\begin{equation}\label{eq:defg}
N^{d} := \left\{ p(x,\mu,\Sigma) = \frac{\exp \left( -\frac{1}{2} (x - \mu)^{\text{T}} \Sigma^{-1} (x - \mu) \right)}{(2 \pi)^{\frac{d}{2}} \left( \det \Sigma \right)^{\frac{1}{2}}} ~\Bigg|~ \mu \in \mathbb{R}^{d},~ \Sigma \in \text{PD}(d,\mathbb{R}) \right\}.
\end{equation}
Consequently, with respect to the parameters $(\mu_{1},\mu_{2},\sigma_{1},\sigma_{2},\sigma_{12})$, where $\mu_{1},\mu_{2} \in \mathbb{R}$, $\sigma_{1},\sigma_{2} \in \mathbb{R}^{+}$ with covariance $\sigma_{12}$, the bivariate Gaussian manifold is defined as
\begin{align}
\mathcal{G} = \Bigg\{ p(x,&y,\mu_{1},\mu_{2},\sigma_{1},\sigma_{2},\sigma_{12}) = \nonumber \\ 
& \frac{1}{2 \pi \sqrt{\sigma_1 \sigma_2 - \sigma_{12}^{\p{12}2}}} \exp \left( \frac{\sigma_{2}(x - \mu_{1})^{2} - 2 \sigma_{12}(x - \mu_{1})(y - \mu_{2}) + \sigma_{1}(y - \mu_{2})^{2}}{2 \left( \sigma_1 \sigma_2 - \sigma_{12}^{\p{12}2} \right)} \right) \Bigg\}.
\end{align}
It is clear by inspection that $\mathcal{G}$ has five real dimensions. The set of all bivariate Gaussian density functions that constitute $\mathcal{G}$ form an exponential family with natural coordinate system
\begin{equation}\label{eq:naturalcoords}
(\theta_{1},\theta_{2},\theta_{3},\theta_{4},\theta_{5}) = \left( \frac{\mu_{1}\sigma_{2} - \mu_{2}\sigma_{12}}{\Delta}, \frac{\mu_{2}\sigma_{1} - \mu_{1}\sigma_{12}}{\Delta}, -\frac{\sigma_{2}}{2\Delta}, \frac{\sigma_{12}}{\Delta}, -\frac{\sigma_{1}}{2\Delta} \right)
\end{equation}
with corresponding potential function
\begin{align*}
\varphi(\theta) &= \log \left( 2 \pi \sqrt{\Delta} \right) + \frac{\mu_{2}^{\p{2}2}\sigma_{1} + \mu_{1}^{\p{1}2}\sigma_{2} - 2\mu_{1}\mu_{2}\sigma_{12}}{2\Delta} \\
 &= \log \left( 2 \pi \sqrt{\Delta} \right) - \Delta \left( \theta_{2}^{\p{2}2}\theta_{3} - \theta_{1}\theta_{2}\theta_{4} + \theta_{1}^{\p{1}2}\theta_{5} \right),
\end{align*}
where
\begin{equation*}
\Delta = \sigma_{1}\sigma_{2} - \sigma_{12}^{\p{12}2} = \frac{1}{4\theta_{3}\theta_{5} - \theta_{4}^{\p{4}2}}.
\end{equation*}
The Fisher-Rao metric on $\mathcal{G}$ takes the form
\begin{equation}
g_{ab} = \left[
\begin{array}{ccccc}
\frac{\sigma_{2}}{\Delta} & -\frac{\sigma_{12}}{\Delta} & 0 & 0 & 0 \\
-\frac{\sigma_{12}}{\Delta} & \frac{\sigma_{1}}{\Delta} & 0 & 0 & 0 \\
0 & 0 & \frac{\sigma_{2}^{\p{2}2}}{2\Delta^{2}} & -\frac{\sigma_{12}\sigma_{2}}{\Delta^{2}} & \frac{\sigma_{12}^{\p{11}2}}{2\Delta^{2}} \\
0 & 0 & -\frac{\sigma_{12}\sigma_{2}}{\Delta^{2}} & \frac{\sigma_{1}\sigma_{2} + \sigma_{12}^{\p{12}2}}{\Delta^{2}} & -\frac{\sigma_{1}\sigma_{12}}{\Delta^{2}} \\
0 & 0 & \frac{\sigma_{12}^{\p{11}2}}{2\Delta^{2}} & -\frac{\sigma_{1}\sigma_{12}}{\Delta^{2}} & \frac{\sigma_{1}^{\p{1}2}}{2\Delta^{2}} \\
\end{array}{}
\right].
\end{equation}
Statistical manifolds support a one-parameter family of affine connections called $\alpha$-connections.  The connection components of the $\alpha$-connection with respect to the local coordinates $(\mu_{1},\mu_{2},\sigma_{1},\sigma_{2},\sigma_{12})$ of the bivariate Gaussian manifold are given by:
\begin{equation}
\Gamma^{(\alpha)}_{ab,c}(\xi) = \int_{-\infty}^{\infty} \int_{-\infty}^{\infty} \ud x~ \ud y \left( \frac{\partial^{2}f}{\partial \xi^{a} \partial \xi^{b}} \frac{\partial \log f}{\partial \xi^{c}} + \frac{1-\alpha}{2} \frac{\partial \log f}{\partial \xi^{a}} \frac{\partial \log f}{\partial \xi^{b}} \frac{\partial \log f}{\partial \xi^{c}} \right) f,
\end{equation}
or equivalently, with respect to the natural coordinates $(\theta_{i})$, are:
\begin{equation}
\Gamma^{(\alpha)}_{ab,c}(\theta) = \frac{1-\alpha}{2} \partial_{a} \partial_{b} \partial_{c} \varphi(\theta).
\end{equation}
The only $\alpha$-connection that is compatible with the Fisher-Rao metric is $\nabla = \nabla^{(0)}$. $\nabla$ is torsion-free and so coincides with the usual Levi-\v{C}ivit\`{a} connection.  The non-vanishing components of $\nabla$ are:
\begin{align}
&\Gamma_{ab}^{\p{ij}1} = \left[
\begin{array}{ccccc}
0 & 0 & -\frac{\sigma_{2}}{2\Delta} & \frac{\sigma_{12}}{2\Delta} & 0 \\
0 & 0 & \frac{\sigma_{12}}{2\Delta} & -\frac{\sigma_{1}}{2\Delta} & 0 \\
-\frac{\sigma_{2}}{2\Delta} & \frac{\sigma_{12}}{2\Delta} & 0 & 0 & 0 \\
\frac{\sigma_{12}}{2\Delta} & \frac{\sigma_{1}}{2\Delta} & 0 & 0 & 0 \\
0 & 0 & 0 & 0 & 0
\end{array}{}
\right],~~~
\Gamma_{ab}^{\p{ij}2} = \left[
\begin{array}{ccccc}
0 & 0 & 0 & -\frac{\sigma_{2}}{2\Delta} & \frac{\sigma_{12}}{2\Delta} \\
0 & 0 & 0 & \frac{\sigma_{12}}{2\Delta} & -\frac{\sigma_{1}}{2\Delta} \\
0 & 0 & 0 & 0 & 0 \\
-\frac{\sigma_{2}}{2\Delta} & \frac{\sigma_{12}}{2\Delta} & 0 & 0 & 0 \\
\frac{\sigma_{12}}{2\Delta} & \frac{\sigma_{1}}{2\Delta} & 0 & 0 & 0
\end{array}{}
\right],\nonumber \\
&\Gamma_{ab}^{\p{ij}3} = \left[
\begin{array}{ccccc}
1 & 0 & 0 & 0 & 0 \\
0 & 0 & 0 & 0 & 0 \\
0 & 0 & -\frac{\sigma_{2}}{\Delta} & \frac{\sigma_{12}}{\Delta} & 0 \\
0 & 0 & \frac{\sigma_{12}}{\Delta} & -\frac{\sigma_{1}}{\Delta} & 0 \\
0 & 0 & 0 & 0 & 0 \\
\end{array}{}
\right],~~~
\Gamma_{ab}^{\p{ij}4} = \left[
\begin{array}{ccccc}
0 & \frac{1}{2} & 0 & 0 & 0 \\
\frac{1}{2} & 0 & 0 & 0 & 0 \\
0 & 0 & 0 & -\frac{\sigma_{2}}{2\Delta} & \frac{\sigma_{12}}{2\Delta} \\
0 & 0 & -\frac{\sigma_{2}}{2\Delta} & \frac{\sigma_{12}}{\Delta} & -\frac{\sigma_{1}}{2\Delta} \\
0 & 0 & \frac{\sigma_{12}}{2\Delta} & -\frac{\sigma_{1}}{2\Delta} & 0 \\
\end{array}{}
\right], \nonumber \\ 
&\Gamma_{ab}^{\p{ij}5} = \left[
\begin{array}{ccccc}
0 & 0 & 0 & 0 & 0 \\
0 & 1 & 0 & 0 & 0 \\
0 & 0 & 0 & 0 & 0 \\
0 & 0 & 0 & -\frac{\sigma_{2}}{2\Delta} & \frac{\sigma_{12}}{2\Delta} \\
0 & 0 & 0 & \frac{\sigma_{12}}{2\Delta} & -\frac{\sigma_{1}}{2\Delta} \\
\end{array}{}
\right].
\end{align}
The curvature tensors of the Fischer-Rao metric are used in calculations in our proofs in section \ref{section:confholg}, so we present them below despite their long forms.  The non-vanishing independent components $\mathsf{R}_{abcd}$ of the Riemann tensor are:
\begin{align}
&\mathsf{R}_{12cd} = - \left[
\begin{array}{ccccc}
0 & \frac{1}{4\Delta} & 0 & 0 & 0 \\
-\frac{1}{4\Delta} & 0 & 0 & 0 & 0 \\
0 & 0 & 0 & -\frac{\sigma_{2}}{4\Delta^{2}} & \frac{\sigma_{12}}{4\Delta^{2}} \\
0 & 0 & \frac{\sigma_{2}}{4\Delta^{2}} & 0 & -\frac{\sigma_{1}}{4\Delta^{2}} \\
0 & 0 & -\frac{\sigma_{12}}{4\Delta^{2}} & \frac{\sigma_{1}}{4\Delta^{2}} & 0 \\
\end{array}{}
\right], \nonumber \\
&\mathsf{R}_{13cd} = - \left[
\begin{array}{ccccc}
0 & 0 & -\frac{\sigma_{2}^{\p{2}3}}{4\Delta^{3}} & \frac{\sigma_{2}^{\p{2}2}\sigma_{12}}{2\Delta^{3}} & -\frac{\sigma_{2}\sigma_{12}^{\p{12}2}}{4\Delta^{3}} \\
0 & 0 & \frac{\sigma_{2}^{\p{2}2}\sigma_{12}}{4\Delta^{3}} & -\frac{\sigma_{2}(\sigma_{1}\sigma_{2}+\sigma_{12}^{\p{12}2})}{4\Delta^{3}} & \frac{\sigma_{1}\sigma_{2}\sigma_{12}}{4\Delta^{3}} \\
\frac{\sigma_{2}^{\p{2}2}}{4\Delta^{3}} & -\frac{\sigma_{2}^{\p{2}2}\sigma_{12}}{4\Delta^{3}} & 0 & 0 & 0 \\
-\frac{\sigma_{2}^{\p{2}2}\sigma_{12}}{2\Delta^{3}} & \frac{\sigma_{2}(\sigma_{1}\sigma_{2}+\sigma_{12}^{\p{12}2})}{4\Delta^{3}} & 0 & 0 & 0 \\
\frac{\sigma_{2}\sigma_{12}^{\p{12}2}}{4\Delta^{3}} & -\frac{\sigma_{1}\sigma_{2}\sigma_{12}}{4\Delta^{3}} & 0 & 0 & 0 \\
\end{array}{}
\right], \nonumber \\
&\mathsf{R}_{14cd} = - \left[
\begin{array}{ccccc}
0 & 0 & \frac{\sigma_{2}^{\p{2}2}\sigma_{12}}{2\Delta^{3}} & -\frac{\sigma_{2}(\sigma_{1}\sigma_{2}+3\sigma_{12}^{\p{12}2})}{4\Delta^{3}} & \frac{\sigma_{12}(\sigma_{1}\sigma_{2}+\sigma_{12}^{\p{12}2})}{4\Delta^{3}} \\
0 & 0 & -\frac{\sigma_{2}\sigma_{12}}{2\Delta^{3}} & \frac{\sigma_{12}(3\sigma_{1}\sigma_{2}+\sigma_{12}^{\p{12}2})}{4\Delta^{3}} & -\frac{\sigma_{1}(\sigma_{1}\sigma_{2}+\sigma_{12}^{\p{12}2})}{4\Delta^{3}} \\
-\frac{\sigma_{2}^{\p{2}2}\sigma_{12}}{2\Delta^{3}} & \frac{\sigma_{2}\sigma_{12}}{2\Delta^{3}} & 0 & 0 & 0 \\
\frac{\sigma_{2}(\sigma_{1}\sigma_{2}+3\sigma_{12}^{\p{12}2})}{4\Delta^{3}} & -\frac{\sigma_{12}(3\sigma_{1}\sigma_{2}+\sigma_{12}^{\p{12}2})}{4\Delta^{3}} & 0 & 0 & 0 \\
-\frac{\sigma_{12}(\sigma_{1}\sigma_{2}+\sigma_{12}^{\p{12}2})}{4\Delta^{3}} & \frac{\sigma_{1}(\sigma_{1}\sigma_{2}+\sigma_{12}^{\p{12}2})}{4\Delta^{3}}  & 0 & 0 & 0 \\
\end{array}{}
\right], \nonumber \\
&\mathsf{R}_{15cd} = - \left[
\begin{array}{ccccc}
0 & 0 & -\frac{\sigma_{2}^{\p{2}2}\sigma_{12}^{\p{12}2}}{4\Delta^{3}} & \frac{\sigma_{12}(\sigma_{1}\sigma_{2}+\sigma_{12}^{\p{12}2})}{4\Delta^{3}} & -\frac{\sigma_{1}\sigma_{12}^{\p{12}2}}{4\Delta^{3}} \\
0 & 0 & \frac{\sigma_{12}^{\p{12}3}}{4\Delta^{3}} & -\frac{\sigma_{1}\sigma_{12}^{\p{12}2}}{2\Delta^{3}} & \frac{\sigma_{1}^{\p{1}2}\sigma_{12}}{4\Delta^{3}} \\
\frac{\sigma_{2}^{\p{2}2}\sigma_{12}^{\p{12}2}}{4\Delta^{3}} & -\frac{\sigma_{12}^{\p{12}3}}{4\Delta^{3}} & 0 & 0 & 0 \\
-\frac{\sigma_{12}(\sigma_{1}\sigma_{2}+\sigma_{12}^{\p{12}2})}{4\Delta^{3}} & \frac{\sigma_{1}\sigma_{12}^{\p{12}2}}{2\Delta^{3}} & 0 & 0 & 0 \\
\frac{\sigma_{1}\sigma_{12}^{\p{12}2}}{4\Delta^{3}} & -\frac{\sigma_{1}^{\p{1}2}\sigma_{12}}{4\Delta^{3}} & 0 & 0 & 0 \\
\end{array}{}
\right], \nonumber
\end{align}
\begin{align}\label{eq:fr-riemann}
&\mathsf{R}_{23cd} = - \left[
\begin{array}{ccccc}
0 & 0 & \frac{\sigma_{2}^{\p{2}2}\sigma_{12}}{4\Delta^{3}} & -\frac{\sigma_{2}\sigma_{12}^{\p{12}2}}{2\Delta^{3}} & \frac{\sigma_{12}^{\p{12}3}}{4\Delta^{3}} \\
0 & 0 & -\frac{\sigma_{2}\sigma_{12}^{\p{12}2}}{4\Delta^{3}} & \frac{\sigma_{12}(\sigma_{1}\sigma_{2}+\sigma_{12}^{\p{12}2})}{4\Delta^{3}} & -\frac{\sigma_{1}\sigma_{12}^{\p{12}2}}{4\Delta^{3}} \\
-\frac{\sigma_{2}^{\p{2}2}\sigma_{12}}{4\Delta^{3}} & \frac{\sigma_{2}\sigma_{12}^{\p{12}2}}{4\Delta^{3}} & 0 & 0 & 0 \\
\frac{\sigma_{2}\sigma_{12}^{\p{12}2}}{2\Delta^{3}} & -\frac{\sigma_{12}(\sigma_{1}\sigma_{2}+\sigma_{12}^{\p{12}2})}{4\Delta^{3}} & 0 & 0 & 0 \\
-\frac{\sigma_{12}^{\p{12}3}}{4\Delta^{3}} & \frac{\sigma_{1}\sigma_{12}^{\p{12}2}}{4\Delta^{3}} & 0 & 0 & 0 \\
\end{array}{}
\right], \nonumber \\
&\mathsf{R}_{24cd} = - \left[
\begin{array}{ccccc}
0 & 0 & -\frac{\sigma_{2}(\sigma_{1}\sigma_{2}+\sigma_{12}^{\p{12}2})}{4\Delta^{3}} & \frac{\sigma_{12}(3\sigma_{1}\sigma_{2}+\sigma_{12}^{\p{12}2})}{4\Delta^{3}} & -\frac{\sigma_{1}\sigma_{12}^{\p{12}2}}{2\Delta^{3}} \\
0 & 0 & \frac{\sigma_{12}(\sigma_{1}\sigma_{2}+\sigma_{12}^{\p{12}2})}{4\Delta^{3}} & -\frac{\sigma_{1}(\sigma_{1}\sigma_{2}+3\sigma_{12}^{\p{12}2})}{4\Delta^{3}} & \frac{\sigma_{1}^{\p{1}2}\sigma_{12}}{2\Delta^{3}} \\
\frac{\sigma_{2}(\sigma_{1}\sigma_{2}+\sigma_{12}^{\p{12}2})}{4\Delta^{3}} & -\frac{\sigma_{12}(\sigma_{1}\sigma_{2}+\sigma_{12}^{\p{12}2})}{4\Delta^{3}} & 0 & 0 & 0 \\
-\frac{\sigma_{12}(3\sigma_{1}\sigma_{2}+\sigma_{12}^{\p{12}2})}{4\Delta^{3}} & \frac{\sigma_{1}(\sigma_{1}\sigma_{2}+3\sigma_{12}^{\p{12}2})}{4\Delta^{3}} & 0 & 0 & 0 \\
\frac{\sigma_{1}\sigma_{12}^{\p{12}2}}{2\Delta^{3}} & -\frac{\sigma_{1}^{\p{1}2}\sigma_{12}}{2\Delta^{3}}  & 0 & 0 & 0 \\
\end{array}{}
\right], \nonumber \\
&\mathsf{R}_{25cd} = - \left[
\begin{array}{ccccc}
0 & 0 & \frac{\sigma_{1}\sigma_{2}\sigma_{12}}{4\Delta^{3}} & -\frac{\sigma_{1}(\sigma_{1}\sigma_{2}+\sigma_{12}^{\p{12}2})}{4\Delta^{3}} & \frac{\sigma_{1}^{\p{1}2}\sigma_{12}}{4\Delta^{3}} \\
0 & 0 & -\frac{\sigma_{1}\sigma_{12}^{\p{12}2}}{4\Delta^{3}} & \frac{\sigma_{1}^{\p{1}2}\sigma_{12}}{2\Delta^{3}} & -\frac{\sigma_{1}^{\p{1}3}}{4\Delta^{3}} \\
-\frac{\sigma_{1}\sigma_{2}\sigma_{12}}{4\Delta^{3}} & \frac{\sigma_{1}\sigma_{12}^{\p{12}2}}{4\Delta^{3}} & 0 & 0 & 0 \\
\frac{\sigma_{1}(\sigma_{1}\sigma_{2}+\sigma_{12}^{\p{12}2})}{4\Delta^{3}} & -\frac{\sigma_{1}^{\p{1}2}\sigma_{12}}{2\Delta^{3}} & 0 & 0 & 0 \\
-\frac{\sigma_{1}^{\p{1}2}\sigma_{12}}{4\Delta^{3}} & \frac{\sigma_{1}^{\p{1}3}}{4\Delta^{3}} & 0 & 0 & 0 \\
\end{array}{}
\right], \nonumber \\
&\mathsf{R}_{34cd} = - \left[
\begin{array}{ccccc}
0 & \frac{\sigma_{2}}{4\Delta^{2}} & 0 & 0 & 0 \\
-\frac{\sigma_{2}}{4\Delta^{2}} & 0 & 0 & 0 & 0 \\
0 & 0 & 0 & -\frac{\sigma_{2}^{\p{2}2}}{4\Delta^{3}} & \frac{\sigma_{2}\sigma_{12}}{4\Delta^{3}} \\
0 & 0 & \frac{\sigma_{2}^{\p{2}2}}{4\Delta^{3}} & 0 & -\frac{\sigma_{1}\sigma_{2}}{4\Delta^{3}} \\
0 & 0 & -\frac{\sigma_{2}\sigma_{12}}{4\Delta^{3}}  & \frac{\sigma_{1}\sigma_{2}}{4\Delta^{3}} & 0 \\
\end{array}{}
\right], \nonumber \\
&\mathsf{R}_{35cd} = - \left[
\begin{array}{ccccc}
0 & \frac{\sigma_{12}}{4\Delta^{2}} & 0 & 0 & 0 \\
-\frac{\sigma_{12}}{4\Delta^{2}} & 0 & 0 & 0 & 0 \\
0 & 0 & 0 & \frac{\sigma_{2}\sigma_{12}}{4\Delta^{3}} & -\frac{\sigma_{12}^{\p{12}2}}{4\Delta^{3}} \\
0 & 0 & -\frac{\sigma_{2}\sigma_{12}}{4\Delta^{3}} & 0 & \frac{\sigma_{1}\sigma_{12}}{4\Delta^{3}} \\
0 & 0 & \frac{\sigma_{12}^{\p{12}2}}{4\Delta^{3}} & -\frac{\sigma_{1}\sigma_{12}}{4\Delta^{3}} & 0 \\
\end{array}{}
\right], \nonumber \\
&\mathsf{R}_{45cd} = - \left[
\begin{array}{ccccc}
0 & -\frac{\sigma_{1}}{4\Delta^{2}}  & 0 & 0 & 0 \\
\frac{\sigma_{1}}{4\Delta^{2}} & 0 & 0 & 0 & 0 \\
0 & 0 & 0 & -\frac{\sigma_{1}\sigma_{2}}{4\Delta^{3}} & \frac{\sigma_{1}\sigma_{12}}{4\Delta^{3}} \\
0 & 0 & \frac{\sigma_{1}\sigma_{2}}{4\Delta^{3}} & 0 & -\frac{\sigma_{1}^{\p{1}2}}{4\Delta^{3}} \\
0 & 0 & -\frac{\sigma_{1}\sigma_{12}}{4\Delta^{3}} & \frac{\sigma_{1}^{\p{1}2}}{4\Delta^{3}} & 0 \\
\end{array}{}
\right]
\end{align}
The Ricci tensor $\mathsf{Ric}_{ab}$ is given by:
\begin{equation}\label{eq:fr-ricci}
\mathsf{Ric}_{ab} = - \left[
\begin{array}{ccccc}
\frac{\sigma_{2}}{2\Delta}  & -\frac{\sigma_{12}}{2\Delta} & 0 & 0 & 0 \\
-\frac{\sigma_{12}}{2\Delta} & \frac{\sigma_{1}}{2\Delta} & 0 & 0 & 0 \\
0 & 0 & \frac{\sigma_{2}^{\p{2}2}}{2\Delta^{2}}  & -\frac{\sigma_{2}\sigma_{12}}{\Delta^{2}} & \frac{3\sigma_{12}^{\p{12}2}-\sigma_{1}\sigma_{2}}{4\Delta^{2}} \\
0 & 0 & -\frac{\sigma_{2}\sigma_{12}}{\Delta^{2}} & \frac{3\sigma_{1}\sigma_{2} + \sigma_{12}^{\p{12}2}}{2\Delta^{2}} & -\frac{\sigma_{1}\sigma_{12}}{\Delta^{2}} \\
0 & 0 & \frac{3\sigma_{12}^{\p{12}2}-\sigma_{1}\sigma_{2}}{4\Delta^{2}} & -\frac{\sigma_{1}\sigma_{12}}{\Delta^{2}} & \frac{\sigma_{1}^{\p{1}2}}{2\Delta^{2}}  \\
\end{array}{}
\right],
\end{equation}
and the scalar curvature $\mathsf{scal}$ is given by:
\begin{equation}
\mathsf{scal} = - \frac{9}{2}.
\end{equation}
Lastly, the sectional curvature of the Fischer-Rao metric is:
\begin{equation}
\mathsf{k} = - \left[
\begin{array}{ccccc}
0 & -\frac{1}{4} & \frac{1}{2} & \frac{\sigma_{1}\sigma_{2}+3\sigma_{12}^{\p{12}2}}{4(\sigma_{1}\sigma_{2}+\sigma_{12}^{\p{12}2})} & \frac{\sigma_{12}^{\p{12}2}}{2\sigma_{1}\sigma_{2}} \\
-\frac{1}{4} & 0 & \frac{\sigma_{12}^{\p{12}2}}{2\sigma_{1}\sigma_{2}} & \frac{\sigma_{1}\sigma_{2}+3\sigma_{12}^{\p{12}2}}{4(\sigma_{1}\sigma_{2}+\sigma_{12}^{\p{12}2})} & \frac{1}{2} \\
\frac{1}{2} & \frac{\sigma_{12}^{\p{12}2}}{2\sigma_{1}\sigma_{2}} & 0 & \frac{1}{2} & \frac{\sigma_{12}^{\p{12}2}}{\sigma_{1}\sigma_{2}+\sigma_{12}^{\p{12}2}} \\
\frac{\sigma_{1}\sigma_{2}+3\sigma_{12}^{\p{12}2}}{4(\sigma_{1}\sigma_{2}+\sigma_{12}^{\p{12}2})} & \frac{\sigma_{1}\sigma_{2}+3\sigma_{12}^{\p{12}2}}{4(\sigma_{1}\sigma_{2}+\sigma_{12}^{\p{12}2})} & \frac{1}{2} & 0 & \frac{1}{2} \\
\frac{\sigma_{12}^{\p{12}2}}{2\sigma_{1}\sigma_{2}} & \frac{1}{2} & \frac{\sigma_{12}^{\p{12}2}}{\sigma_{1}\sigma_{2}+\sigma_{12}^{\p{12}2}} & \frac{1}{2} & 0 \\
\end{array}{}
\right].
\end{equation}


\section{Conformal geometry}
\subsection{Conformal structures and tractor bundles}
To preface our exposition, we recall the modern approach to conformal geometry as discussed in \cite{governurowski, baumreview} and \cite{eastwood}.  Other excellent reviews are given in \cite{bailey, goverholography, govershaukatwaldron2, govershaukatwaldron1, goverboundary} and \cite{leitnerhab}.  Consider an $n$-dimensional manifold $M$ endowed with a semi-Riemannian metric $g$ of signature $(p,q)$.  The conformal class $[g]$ of the metric $g$ is the set of metrics
\begin{equation}
[g] :=  \Big\{ e^{2 \Upsilon} g ~\Big|~ \Upsilon \in C^{\infty}(M) \Big\}
\end{equation}
which are equivalent to $g$ upto a scaling by a positive function, $e^{2 \Upsilon}$.  We denote the conformal class of a semi-Riemannian manifold by $c$ when we do not wish to privilege a particular metric.  When there exists an $\Upsilon$ such that $\wh{g} = e^{2 \Upsilon} g$, the metrics $\wh{g}$ and $g$ are said to be conformally related (or conformally equivalent) and the map $g \to \wh{g}$ is called a conformal transformation.  The pair $(M,[g])$ is called a conformal manifold and $[g]$ a conformal structure for $M$.  Importantly, when the conformal class $[g]$ on a manifold $M$ contains an Einstein metric, $(M,[g])$ is said to be conformally Einstein.  Lastly, we recall Penrose's abstract index notation, where a single kernel letter ($\mathcal{E}$ is the usual choice) is adorned with non-numerical \emph{abstract} indices according to the bundle structure.  For example, a vector field $V$ would be written as $V^{a} \in \Gamma(\mathcal{E}^{a})$, where $\mathcal{E}^{a}$ is the abstractly indexed tangent bundle, and so on.\\ \\
Returning to our discussion of conformal structures, given two metrics $\wh{g}$ and $g$ in the conformal class and a point $x \in M$, there exists some positive number $s \in \mathbb{R}^{+}$ such that the metrics are multiples of eachother:
\begin{equation}\label{eq:realmultiplication}
\wh{g}_{x} = s g_{x}.
\end{equation}
The conformal structure can be described as a smooth ray sub-bundle $\mathcal{Q}$ of the bundle of metrics, $\odot^{2} T^{*}M$.  The fibre $\mathcal{Q}_{x}$ consists of conformally related metrics at the point $x$, and so points of $\mathcal{Q}$ are pairs of the form $(g_{x}, x)$.  By construction, sections of $\mathcal{Q}$ are in bijective correspondence with metrics in the conformal class. $\mathcal{Q}$ is an $\mathbb{R}^{+}$-principal bundle on $M$, and using the irreducible representation
\begin{equation}
\mathbb{R}^{+} \ni x \longmapsto x^{-w/2} \in \text{End}(\mathbb{R}^{+}),
\end{equation}
one may construct for each $w$ an associated line bundle $\mathcal{E}[w]$ over $(M,[g])$ called a conformal density bundle\footnote{$\mathcal{E}[w]$ are called density bundles because they may be identified with powers of the 1-density bundle (associated to the frame bundle) via the representation $|\text{det}(\cdot)|^{-1}$.}, which is trivialised by a choice of metric $g \in c$.  Sections of $\mathcal{E}[w]$ are called conformal densities of weight $w$, where $w$ is the conformal weight.  One may construct conformally weighted quantities by twisting an appropriate bundle with a conformal density bundle.  For example, one may write a conformally weighted covector $\eta$ of conformal weight $w$ as $\eta_{a} \in \Gamma(\mathcal{E}_{a}[w])$, where $\mathcal{E}_{a}[w] := \mathcal{E}_{a} \otimes \mathcal{E}[w]$.  Importantly, for $w=1$, sections $\sigma \in \Gamma (\mathcal{E}[1])$ are called conformal scales \cite{governurowski}.  Continuing, there exists a canonical section $\boldsymbol{g}$ of $\mathcal{E}_{(ab)}[2]$ called the conformal metric which has the property that choosing a non-vanishing conformal scale $\sigma \in \Gamma (\mathcal{E}[1])$ defines a unique metric $g$ in the conformal class according to
\begin{equation}\label{eq:confmetric}
g := \sigma^{-2}\boldsymbol{g}.
\end{equation}
The conformal metric is the \emph{de facto} object which is used to identify $\mathcal{E}^{a}$ with $\mathcal{E}_{a}[2]$ and so raise and lower indices.  Notice that given a non-vanishing conformal scale $\sigma$, one may consider a semi-Riemannian manifold $(M,g)$ to be a triple $(M,c,\sigma)$, since the semi-Riemannian metric $g$ can be recovered from equation \eqref{eq:confmetric}.\\ \\
We now proceed to recall the rudiments of tractor calculus.  It is a fact that over any $n$-dimensional semi-Riemannian conformal manifold $(M,[g])$ of signature $(p,q)$ there exists a rank $(n+2)$ vector bundle $\mathcal{T} \to M$ called the standard tractor bundle which, upon choosing a metric $g$ in the conformal class, decomposes into a direct sum of conformal density bundles:
\begin{equation}\label{eq:confhol-tractorbundle}
\mathcal{T} = \mathcal{E}^{A} \overset{g}{:=} \mathcal{E}[1] \oplus \mathcal{E}^{a}[1] \oplus \mathcal{E}[-1].
\end{equation}
Sections of both the tractor bundle and its various tensor powers are referred to as tractor fields (or simply tractors), and are adorned with upper-case Latin indices.  One can see from the bundle structure of \eqref{eq:confhol-tractorbundle} that sections of the standard tractor bundle are triples:
\begin{equation}\label{eq:confchangetractor}
\Gamma(\mathcal{T}) \ni V^{A} :=
\left( \begin{array}{c}
\sigma \\ X^{a} \\ y
\end{array}{} \right),
\end{equation}
where $\sigma$ is referred to as the primary part, $X^{a}$ the secondary part and so on.  Now, if the chosen metric $g_{ab}$ is conformally transformed to $\wh{g}_{ab} = e^{2 \Upsilon} g_{ab}$, then the tractor $(\sigma, X^{a}, y)^{\perp}$ is related to its counterpart in the new scale defined by $\wh{g}$ according to
\begin{equation}
\left( \begin{array}{c}
\wh{\sigma} \\ \wh{X}^{a} \\ \wh{y}
\end{array}{} \right)
=
\left( \begin{array}{c}
\sigma \\ X^{a} + \Upsilon^{a} \sigma \\ y - \Upsilon_{b} X^{b} - \frac{1}{2} \Upsilon_{b} \Upsilon^{b} \sigma
\end{array}{} \right),
\end{equation}
where $\Upsilon_{a} := \nabla_{a} \Upsilon$.  The tractor bundle is equipped with a conformally invariant metric $h_{AB}$ of signature $(p+1,q+1)$ called the tractor metric, and we denote by $\la ~,~ \ra$ the associated tractor inner product.   In terms of two tractors $V^{A} = (\sigma, X^{a}, y)^{\perp} ~~\text{and } ~~ V^{B} = (\varsigma, Z^{b}, w)^{\perp}$ we have:
\begin{equation}\label{eq:tractormetric}
h_{AB} V^{A} V^{B} = V^{A} V_{A} = \sigma w + X^{a} Z_{a} + y \varsigma.
\end{equation}
The tractor bundle is endowed with a (tractor) metric compatible connection $\nabla^{\mathcal{T}}$ called the tractor connection.  With respect to the splitting \eqref{eq:confhol-tractorbundle}, the tractor connection is defined by:
\begin{equation}\label{eq:tractorconn}
\nabla_{b}^{\mathcal{T}} \left(
\begin{array}{c}
\sigma \\ X^{a} \\ y
\end{array}{}
\right)
:= \left(
\begin{array}{c}
\nabla_{b} \sigma - X_{b} \\ \nabla_{b} X^{a} + \delta_{b}^{\p{b}a}y + \mathsf{P}_{b}^{\p{b}a} \sigma \\ \nabla_{b} y - \mathsf{P}_{bc} X^{c}
\end{array}{}
\right)
\end{equation}
for $V^{A} = (\sigma, X^{a}, y)^{\perp}$, where $\nabla$ is the Levi-\v{C}ivit\`{a} connection of the chosen metric $g$ and $\mathsf{P}_{ab}$ is the trace-adjusted Ricci tensor, or Schouten tensor (of the conformal metric $\boldsymbol{g}$),
\begin{equation}\label{eq:schouten}
\mathsf{P}_{ab} = \frac{1}{n-2} \left( \mathsf{Ric}_{ab} - \mathsf{J} \boldsymbol{g}_{ab} \right),
\end{equation}
where $\mathsf{J} := \mathsf{P}^{c}_{\p{c}c}$ is the conformal metric trace of the Schouten tensor (rather than the $g$-trace).  Using equation \eqref{eq:confchangetractor}, it is straightforward to show that $\nabla^{\mathcal{T}}$ is a conformally invariant connection:
\begin{equation}
\wh{\nabla}_{b}^{\mathcal{T}} \left(
\begin{array}{c}
\wh{\sigma} \\ \wh{X}^{a} \\ \wh{y}
\end{array}{}
\right)
=
\left(
\begin{array}{c}
\wh{\nabla}_{b} \wh{\sigma} - \wh{X}_{b} \\ \wh{\nabla}_{b} \wh{X}^{a} + \delta_{b}^{\p{b}a}\wh{y} + \wh{\mathsf{P}}_{b}^{\p{b}a} \wh{\sigma} \\ \wh{\nabla}_{b} \wh{y} - \wh{\mathsf{P}}_{bc} \wh{X}^{c}
\end{array}{}
\right)
=
\left(
\begin{array}{c}
\nabla_{b} \sigma - X_{b} \\ \nabla_{b} X^{a} + \delta_{b}^{\p{b}a}y + \mathsf{P}_{b}^{\p{b}a} \sigma \\ \nabla_{b} y - \mathsf{P}_{bc} X^{c}
\end{array}{}
\right)
=
\nabla_{b}^{\mathcal{T}} \left(
\begin{array}{c}
\sigma \\ X^{a} \\ y
\end{array}{}
\right).
\end{equation}
We say that a tractor $V$ is a parallel tractor if and only if it is parallel with respect to the tractor connection:
\begin{equation}\label{eq:paralleltractor}
\nabla^{\mathcal{T}} V =0.
\end{equation}
To conclude our discussion of the modern approach to conformal geometry, we note that there is an important relation between parallel tractors and Einstein metrics in the conformal class:
\begin{theorem}[\cite{baumjuhl}]\label{thm-paralleleinstein}
Let $(M,c)$ be a conformal manifold of dimension $\geq 3$.  If the conformal class $c$ contains an Einstein metric $g$, then there exists a non-trivial section $V \in \Gamma(\mathcal{T})$ with $\nabla^{\mathcal{T}} V =0$.  On the other hand, if we suppose that there exists a nontrivial $\nabla^{\mathcal{T}}$-parallel section $V \in \Gamma(\mathcal{T})$, then there is an open dense subset $\wt{M} \subset M$ and an Einstein metric $g$ in the conformal class $c|_{\wt{M}}$.  In both cases, for the scalar curvature $\mathsf{scal}(g)$ of $g$ it holds that:
\begin{equation}
\begin{array}{l}
\mathsf{scal}(g) > 0 \iff \la V,V \ra < 0, \\
\mathsf{scal}(g) = 0 \iff \la V,V \ra = 0, \\
\mathsf{scal}(g) < 0 \iff \la V,V \ra > 0. 
\end{array} 
\end{equation}
\hfill $\square$
\end{theorem}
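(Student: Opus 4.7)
The plan is to establish both directions via an explicit correspondence between Einstein scales and parallel standard tractors, working in a distinguished metric of the conformal class and then invoking conformal invariance of $\nabla^{\mathcal{T}}$ to make the construction independent of that choice.

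For the forward direction, I would fix the Einstein representative $g \in c$, so that $\mathsf{Ric}_{ab} = \frac{\mathsf{scal}}{n}g_{ab}$ and the Schouten tensor reduces to the pure-trace form $\mathsf{P}_{ab} = \frac{\mathsf{J}}{n}\boldsymbol{g}_{ab}$ with $\mathsf{J} = \frac{\mathsf{scal}}{2(n-1)}$. Since $\mathsf{J}$ is then a constant (the contracted second Bianchi identity kills $\nabla \mathsf{J}$ once $\mathsf{Ric}$ is a constant multiple of $g$, here using $n \geq 3$), I would propose the candidate
\begin{equation*}
V^{A} = \left( 1,\; 0,\; -\tfrac{\mathsf{J}}{n} \right)^{\perp}
\end{equation*}
and simply verify the three component equations of \eqref{eq:tractorconn}: the top slot $\nabla_b \sigma - X_b = 0$ is trivial; the middle slot $\nabla_b X^{a} + \delta_b^{\p{b}a} y + \mathsf{P}_b^{\p{b}a}\sigma = 0$ is precisely the Einstein condition; and the bottom slot $\nabla_b y - \mathsf{P}_{bc} X^{c} = 0$ follows from constancy of $\mathsf{J}$. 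The tractor pairing then gives $\langle V,V \rangle = 2\sigma y + X^{a} X_{a} = -\frac{2\mathsf{J}}{n} = -\frac{\mathsf{scal}}{n(n-1)}$, from which all three sign equivalences read off immediately.

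For the converse, I would take a nontrivial parallel $V^{A} = (\sigma, X^{a}, y)^{\perp}$ in an arbitrary scale. The top slot of the parallel equation identifies $X_b = \nabla_b \sigma$, and lowering the index and taking the symmetric part of the middle slot yields the almost-Einstein equation
\begin{equation*}
\nabla_{a} \nabla_{b} \sigma + \mathsf{P}_{ab}\, \sigma + y\, \boldsymbol{g}_{ab} = 0.
\end{equation*}
Defining $\wt{M} := \{x \in M : \sigma(x) \neq 0\}$, the section $\sigma$ restricts to a non-vanishing conformal scale on $\wt{M}$, so $\wt{g} := \sigma^{-2} \boldsymbol{g}$ is a genuine metric there. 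A direct computation of the conformal transformation law for the Schouten tensor under $g \mapsto \sigma^{-2} g$, substituting the displayed equation for $\nabla_a \nabla_b \sigma$, shows that $\wt{\mathsf{P}}_{ab}$ becomes a pure multiple of $\wt{g}_{ab}$, which is equivalent to $\wt{g}$ being Einstein. The sign equivalences in the converse then follow from conformal invariance of $\langle V, V \rangle$ and the same computation as in the forward direction carried out in the scale of $\wt{g}$.

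The main obstacle is proving that $\wt{M}$ is dense in $M$, since a priori $\sigma$ could vanish on large sets. I would handle this by a unique continuation argument at the level of the parallel equation: if $\sigma$ vanished on an open set $U$, then $X_b = \nabla_b \sigma$ would also vanish on $U$, and substituting $\sigma = 0$, $X^{a} = 0$ into the middle slot of \eqref{eq:tractorconn} forces $y = 0$ on $U$ as well, so $V \equiv 0$ on $U$. But $V$ is a parallel section of the tractor bundle, and parallel transport is a linear ODE, so a parallel section vanishing on an open set must vanish everywhere, contradicting non-triviality. Hence the zero set of $\sigma$ has empty interior and $\wt{M}$ is open and dense, completing the proof.
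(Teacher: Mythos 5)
Your proof is correct, but note that the paper does not actually prove this statement: Theorem \ref{thm-paralleleinstein} is imported verbatim from the cited reference \cite{baumjuhl} and stated with the proof omitted, so there is no internal argument to compare against. What you have written is essentially the standard proof from that literature: in the Einstein scale the Schouten tensor is pure trace with constant $\mathsf{J}$, the tractor $(1,0,-\mathsf{J}/n)^{\perp}$ is verified to be parallel slot by slot, and $\la V,V\ra = -2\mathsf{J}/n = -\mathsf{scal}/n(n-1)$ gives the sign dictionary; conversely the top and middle slots of $\nabla^{\mathcal{T}}V=0$ produce the almost-Einstein equation $\nabla_a\nabla_b\sigma + \mathsf{P}_{ab}\sigma + y\,\boldsymbol{g}_{ab}=0$, and rescaling by $\sigma^{-2}$ off its zero set yields the Einstein metric. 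Two very minor points: your density argument (and the forward construction) implicitly uses connectedness of $M$, which should be stated since a parallel section is determined by its value at one point only on a connected base; and the constancy of $\mathsf{J}$ is Schur's lemma --- the Einstein condition a priori gives $\mathsf{Ric}$ as a \emph{pointwise} multiple of $g$, and the contracted Bianchi identity then forces that multiple to be constant for $n\geq 3$, so the phrase ``constant multiple'' slightly begs the question. Neither affects the validity of the argument.
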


\subsection{Conformal holonomy theory}\label{section:theory}
Let $\nabla^{E}$ be a connection on a vector bundle $E \to M$ (with $M$ connected) and consider a smooth curve  $\gamma : [a, b] \subset \mathbb{R} \to M$ connecting two points $x$ and $y$ of $M$.  It is a fact that for any $e \in E_{x}$, there is a unique vector field $X_{e}$ along $\gamma$ with initial value $X_{e}(a) = e$, which is parallel to $\gamma$ along its length:
\begin{equation}
\frac{\nabla^{E} X_{e}}{\ud t}(t) =0, ~\text{ for all }~ t \in [a,b].
\end{equation}
The map
\begin{align}
{\mathscr{P}}^{\nabla^{E}}_{\gamma} : E_{x} &\longrightarrow E_{y} \nonumber \\
e &\longmapsto  X_{e}(b)
\end{align}
is a vector space isomorphism called parallel transport.  For a smooth loop $\gamma$ centred at $x \in M$ (that is to say, a curve for which $x = \gamma(a) = \gamma(b)$), parallel transport ${\mathscr{P}}^{\nabla^{E}}_{\gamma}: E_{x} \longrightarrow E_{x}$ is an automorphism of the fibre and so, because $E_{x}$ is a vector space, ${\mathscr{P}}^{\nabla^{E}}_{\gamma} \in \text{GL}(E_{x})$.  Denoting by ${\mathcal{L}}(M,x)$ the set of all smooth loops at $x \in M$, the holonomy group of $(E, \nabla^{E})$ with respect to the base point $x$ is the Lie group of all parallel displacements along loops centred at $x$:
\begin{equation}
\text{Hol}_{x} \left( E, \nabla^{E} \right) := \left\{ {\mathscr{P}}^{\nabla^{E}}_{\gamma} ~\Big|~ \gamma \in {\mathcal{L}}(M, x) \right\} \subset \text{GL}(E_{x}).
\end{equation}
The reduced holonomy group of $(E, \nabla^{E})$ (with respect to the base point $x$) are those holonomies around contractible loops:
\begin{equation}
\text{Hol}^{0}_{x} \left( E, \nabla^{E} \right) := \left\{ {\mathscr{P}}^{\nabla^{E}}_{\gamma} ~\Big|~ \gamma \in {\mathcal{L}}(M, x) \text{ null homotopic} \right\}.
\end{equation}
For a simply connected base space $M$, all loops are contractible and so $\text{Hol}^{0}_{x} \left( E, \nabla^{E} \right) = \text{Hol}_{x} \left( E, \nabla^{E} \right)$ in this case.  It is a fact that holonomy groups with respect to different base points are conjugate to eachother, which is to say that for a smooth curve $\gamma$ connecting $x$ and $y$,
\begin{equation}
\text{Hol}_{y} \left( E, \nabla^{E} \right) = \mathscr{P}^{\nabla^{E}}_{\gamma} \circ \text{Hol}_{x} \left( E, \nabla^{E} \right) \circ \mathscr{P}^{\nabla^{E}}_{\gamma^{-1}}.
\end{equation}
Therefore, the holonomy group $\text{Hol} \left( E, \nabla^{E} \right)$ is regarded as a conjugated class of groups without reference to a particular basepoint.  The representation $\rho: \text{Hol} \left( E, \nabla^{E} \right) \to \text{GL}(E_{x})$ is called the holonomy representation.  The action of a holonomy group $\text{Hol}(E, \nabla^{E})$ on its holonomy representation $E_{x}$ may leave a vector \emph{invariant}. A vector $w \in E_{x}$ is said to be holonomy invariant if
\begin{equation}\label{eq:holinvariantcondition}
\mathscr{P}^{\nabla^{E}}_{\gamma}(w) = w \text{ for all } \mathscr{P}^{\nabla^{E}}_{\gamma} \in \text{Hol}(E, \nabla^{E}).
\end{equation}
Analogously, a vector subspace $V$ of the holonomy representation is said to be holonomy invariant if condition \eqref{eq:holinvariantcondition} holds for all $v \in V$, and we denote this simply by
\begin{equation}
\text{Hol}(E, \nabla^{E})V = V.
\end{equation}
If the holonomy representation splits into the direct sum of (at least) two non-degenerate holonomy invariant subspaces, then it is said to be decomposable, and is indecomposable otherwise. It is worth noting that an irreducible representation is indecomposable, but an indecomposable representation is merely weakly irreducible.\\ \\
With this background in place, we proceed to discuss the related concept (and useful tool) of metric holonomy.  For a semi-Riemannian manifold $(M,g)$ of signature $(p,q)$, the metric holonomy group with respect to the basepoint $x$ is the holonomy group of the Levi-\v{C}ivit\`{a} connection $\nabla$ on the tangent bundle:
\begin{equation}
\text{Hol}_{x}(M,g) := \text{Hol}_{x}(TM, \nabla).
\end{equation}
Since the Levi-\v{C}ivit\`{a} connection preserves the metric, so does parallel transport.  Therefore the map ${\mathscr{P}}^{\nabla}_{\gamma}: T_{x}M \to T_{x}M$ is a linear isometry, and so
\begin{equation}
\text{Hol}_{x}(M,g) \subset O(T_{x}M, g_{x}).
\end{equation}
With respect to an orthogonal basis for $T_{x}M$, the metric holonomy group of $M$ may be regarded as a conjugated class of subgroups of the semi-orthogonal group $O(p,q)$ (again, without reference to a particular base point).\\ \\
We will now discuss the classification of metric holonomy groups.  To begin, notice that any weakly irreducible metric holonomy representation of a Riemannian manifold is in fact irreducible because $(T_{x}M, g_{x}) = (\mathbb{R}^{n}, g_{x})$ with $g_{x}$ positive definite does not admit any degenerate subspaces.  However, for a pseudo-Riemannian manifold of signature $(p,q)$, the holonomy representation $T_{x} M = \mathbb{R}^{p,q}$ may admit a degenerate, holonomy invariant subspace.  It is the presence of these degenerate, holonomy invariant subspaces that complicates the classification of pseudo-Riemannian metric holonomy (\emph{c.f.} \cite{kath, baumreview} and \cite{galaevleistner}.)  However, the metric holonomy groups of (simply connected) irreducible semi-Riemannian manifolds are completely classified and we shall recall some facts which will be useful throughout, focussing on the Riemannian theory for direct applicability to information geometry.  To begin, recall that the holonomy representation of a symmetric space is given by the isotropy representation:
\begin{theorem}[\cite{baumreview}]\label{thm-symmetricspaces}
Let $(M,g)$ be a symmetric space and let $G \subset \text{Isom}(M)$ be its transvection group.  Furthermore, let $\lambda : H \to \text{GL}(T_{x}M)$ be the isotropy representation of the stabilizer $H = G_{x}$ of a point $x \in M$.  Then
\begin{equation}
\lambda(H) = \text{Hol}_{x}(M,g).
\end{equation}
In particular, the holonomy group $\text{Hol}_{x}(M,g)$ is isomorphic to the stabilizer $H$ and, using this isomorphism, the holonomy representation $\rho$ is given by the isotropy representation $\lambda$. \p{.} \hfill $\square$
\end{theorem}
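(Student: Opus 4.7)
The plan is to exploit the homogeneous-space structure of a symmetric space. Writing $M = G/H$ with $H = G_x$ the stabilizer of the chosen basepoint in the transvection group, the Lie algebra splits reductively as $\mathfrak{g} = \mathfrak{h} \oplus \mathfrak{m}$ with $[\mathfrak{h},\mathfrak{m}] \subset \mathfrak{m}$ and $[\mathfrak{m},\mathfrak{m}] \subset \mathfrak{h}$; moreover $\mathfrak{h} = [\mathfrak{m},\mathfrak{m}]$ holds by the very definition of the transvection group (it is generated by products of pairs of geodesic symmetries, whose infinitesimal generators are brackets of elements of $\mathfrak{m}$). The evaluation map $X \mapsto X^{\ast}_{x}$ furnishes the canonical identification $\mathfrak{m} \cong T_{x}M$, under which the isotropy representation $\lambda$ corresponds to the adjoint representation $\operatorname{Ad}\colon H \to \operatorname{GL}(\mathfrak{m})$.

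First I would establish the inclusion $\operatorname{Hol}_{x}(M,g) \subset \lambda(H)$. The key classical fact to invoke is that for any geodesic $\gamma$ from $x$ to $y$ in a symmetric space, the Levi-\v{C}ivit\`{a} parallel transport $\mathscr{P}^{\nabla}_{\gamma}\colon T_{x}M \to T_{y}M$ coincides with the differential at $x$ of a transvection $\tau_{\gamma} \in G$ carrying $x$ to $y$. Given any piecewise-smooth loop $\gamma$ at $x$, I would approximate it by a broken geodesic loop; parallel transport along the broken geodesic is then a finite composition of transvection-differentials, and since the underlying composition of transvections fixes $x$ by construction, it lands in $H$. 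Density of broken-geodesic loops in the space of loops at $x$, together with the closedness of $\operatorname{Hol}_{x}$ inside $\operatorname{GL}(T_{x}M)$, then completes the inclusion.

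For the reverse inclusion $\lambda(H) \subset \operatorname{Hol}_{x}(M,g)$, I would invoke the Ambrose--Singer theorem. Since $(M,g)$ is symmetric, $\nabla \mathsf{R} = 0$, so the holonomy algebra at $x$ is exactly $\operatorname{span}\{\mathsf{R}(X,Y)|_{x} : X,Y \in T_{x}M\} \subset \mathfrak{so}(T_{x}M, g_{x})$. Under $T_{x}M \cong \mathfrak{m}$ the curvature of the canonical (Levi-\v{C}ivit\`{a}) connection on $G/H$ is given by the standard formula $\mathsf{R}(X,Y)Z = -[[X,Y],Z]_{\mathfrak{m}}$, so the holonomy algebra equals $\operatorname{ad}([\mathfrak{m},\mathfrak{m}])|_{\mathfrak{m}} = \operatorname{ad}(\mathfrak{h})|_{\mathfrak{m}} = \lambda_{\ast}(\mathfrak{h})$. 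Exponentiating, and using that the stabilizer $H$ inside the connected transvection group $G$ is itself connected, passes the equality of Lie algebras to the equality of Lie groups.

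The main obstacle is really the geometric content of step two, namely the identification of parallel transport along geodesics with one-parameter subgroups of transvections; once that is accepted, both inclusions reduce to structural facts about the reductive pair $(\mathfrak{g},\mathfrak{h})$. A secondary technical point is the reduction of arbitrary smooth loops to broken-geodesic loops, which uses the standard local-to-global interplay between the exponential map and parallel transport together with compactness of the loop's image. Taken together, these give both the set-theoretic equality $\lambda(H) = \operatorname{Hol}_{x}(M,g)$ and the identification of $\lambda$ as the holonomy representation, which is the content of the theorem.
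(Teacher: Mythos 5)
First, note that the paper does not prove this statement at all: Theorem \ref{thm-symmetricspaces} is imported verbatim from Baum's survey \cite{baumreview} and stated with the proof omitted, so there is no in-paper argument to compare yours against. Judged on its own terms, your outline follows the standard route and gets the first inclusion essentially right: parallel transport along a geodesic segment is the differential of a transvection $s_{m}\circ s_{z}$ (with $m$ the midpoint), a broken geodesic loop at $x$ therefore yields an element of $G$ fixing $x$, and a limiting argument handles general loops. (One small slip there: the closedness you need for the limit is that of $\lambda(H)$, the group you are landing in, not of $\operatorname{Hol}_{x}$; for Riemannian symmetric spaces this is fine since $H$ is compact.)

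The genuine gap is in the reverse inclusion. Your Ambrose--Singer computation correctly identifies the holonomy \emph{algebra} with $\lambda_{*}(\mathfrak{h})=\operatorname{ad}([\mathfrak{m},\mathfrak{m}])|_{\mathfrak{m}}$, but passing from this to $\lambda(H)\subset\operatorname{Hol}_{x}(M,g)$ rests on your assertion that the stabilizer $H$ in the connected transvection group is itself connected. That is false in general when $M$ is not simply connected: for $M=\mathbb{RP}^{2}$ the transvection group is $SO(3)$, the stabilizer is $O(2)$, and indeed $\operatorname{Hol}=O(2)\neq SO(2)=\lambda(H^{0})$. So your argument only delivers $\operatorname{Hol}^{0}_{x}=\lambda(H)^{0}$, which together with the first inclusion does not close the equality. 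The standard repair avoids Ambrose--Singer entirely: since $d s_{q}|_{z}=-\mathscr{P}_{z\to s_{q}(z)}$ (minus the parallel transport along the geodesic through $q$), the differential at $x$ of any product of an even number of geodesic symmetries is parallel transport along a broken geodesic; applied to an arbitrary $h\in H$ (a word in the generators $s_{p}s_{q}$ that fixes $x$), this exhibits $\lambda(h)$ directly as a holonomy element, with no connectedness hypothesis. Alternatively, you would need to add the hypothesis that $M$ is simply connected, which the statement as given does not assume.
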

Since irreducible (locally) symmetric spaces are completely classified\footnote{The list, which describes pairs $(G,H)$ together with the isotropy representation $\lambda$, may be found in the pseudonymous Besse's monograph \cite{besse}.} (\emph{i.e.} those symmetric spaces with irreducible isotropy representation) so are their holonomy groups by theorem \ref{thm-symmetricspaces}.  Consequently, to complete the classification of irreducible metric holonomy, only the non-locally symmetric case remains.  This was completed by Berger in \cite{berger}:
\begin{theorem}[Berger \cite{baumreview}]
Let $(M^{n},g)$ be an $n$-dimensional, simply connected, irreducible, non-locally symmetric Riemannian manifold.  Then the holonomy group $\text{Hol}(M^{n},g)$ is up to conjugation in $O(n)$ either $SO(n)$ or one of the following groups with its standard representation:
\begin{table}[!htbp]
\centering
\caption{Riemannian Berger list} \label{table:bergerriemannian}
\vspace{1mm}
\begin{tabular}{lll}
  \toprule[1.5pt]
Dimension & Holonomy group & Special geometry \\
  \midrule
$2m \geq 4$ & $U(m)$ & K\"{a}hler manifold \vspace{1mm} \\
$2m \geq 4$ & $SU(m)$ & Ricci-flat K\"{a}hler manifold  \vspace{1mm} \\
$4m \geq 8$ & $\text{Sp}(m)$ & Hyperk\"{a}hler manifold   \vspace{1mm} \\
$4m \geq 8$ & $\text{Sp}(m) \cdot \text{Sp}(1)$ & Quaternionic K\"{a}hler manifold  \vspace{1mm} \\
7 & $G_{2}$ & $G_{2}$-manifold  \vspace{1mm} \\
8 & $\text{Spin}(7)$ & $\text{Spin}(7)$-manifold \vspace{1mm} \\
 \bottomrule[1.5pt]
\end{tabular} 
\end{table}
\end{theorem}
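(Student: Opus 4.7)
The plan is to follow Berger's original strategy, an algebraic classification of the possible Lie algebras $\mathfrak{h} := \mathfrak{hol}_{x}(M^{n},g) \subset \mathfrak{so}(T_{x}M) \cong \mathfrak{so}(n)$ grounded in the Ambrose-Singer holonomy theorem. That theorem identifies $\mathfrak{h}$ as the Lie subalgebra of $\mathfrak{so}(n)$ generated, after parallel transport back to $x$, by the values $\mathscr{P}^{\nabla}_{\gamma^{-1}} \circ \mathsf{R}_{y}(u,v) \circ \mathscr{P}^{\nabla}_{\gamma}$ of the Riemann curvature tensor at all points $y \in M$. Under the hypotheses of the theorem, $\mathfrak{h}$ acts irreducibly on $\mathbb{R}^{n}$, and the non-locally-symmetric condition means some iterated covariant derivative $\nabla^{k}\mathsf{R}$ is non-vanishing.

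First I would introduce the two spaces of formal curvature-type tensors associated to an arbitrary Lie subalgebra $\mathfrak{h} \subset \mathfrak{so}(n)$,
\begin{equation*}
K(\mathfrak{h}) := \{ R \in \Lambda^{2}(\mathbb{R}^{n})^{*} \otimes \mathfrak{h} \mid R \text{ satisfies the first Bianchi identity} \},
\end{equation*}
\begin{equation*}
K^{1}(\mathfrak{h}) := \{ T \in (\mathbb{R}^{n})^{*} \otimes K(\mathfrak{h}) \mid T \text{ satisfies the differential Bianchi identity} \},
\end{equation*}
and extract Berger's two necessary criteria. The first, that $\mathfrak{h}$ coincides with the linear span of $\{ R(u,v) : R \in K(\mathfrak{h}),\ u,v \in \mathbb{R}^{n} \}$, follows from Ambrose-Singer: otherwise $\mathsf{R}$ and all its derivatives would take values in a proper subalgebra, contradicting $\mathfrak{h} = \mathfrak{hol}$. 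The second, that $K^{1}(\mathfrak{h}) \neq 0$, follows from the non-locally-symmetric hypothesis: if this space were trivial, the second Bianchi identity would force $\nabla \mathsf{R} \equiv 0$ and $(M,g)$ would be locally symmetric, a case already covered by Theorem \ref{thm-symmetricspaces}.

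The classification then becomes purely algebraic: enumerate all connected, irreducibly acting subgroups $H \subset SO(n)$ whose Lie algebras satisfy both Berger criteria. Since $\mathfrak{h}$ acts irreducibly on $\mathbb{R}^{n}$, Schur's lemma applied to its commutant forces $\mathfrak{h}$ to be either semisimple or the direct sum of a one-dimensional centre with a semisimple part (the latter occurring only when the representation admits a complex or quaternionic structure). Running through the Cartan classification of real simple Lie algebras together with their low-dimensional faithful representations, one decomposes the $\mathfrak{h}$-module $\Lambda^{2}(\mathbb{R}^{n})^{*} \otimes \mathfrak{h}$ into irreducibles, identifies the Bianchi kernel $K(\mathfrak{h})$ and tests both criteria candidate by candidate. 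The main obstacle is the sheer volume of representation-theoretic bookkeeping and the delicacy of the second criterion: it is here that the exotic cases $G_{2} \subset SO(7)$ and $\text{Spin}(7) \subset SO(8)$ (via the real spin representation) only barely survive while a host of superficially plausible candidates are eliminated. After the dust settles, the only survivors are $SO(n)$ together with the six entries of Table \ref{table:bergerriemannian}. A complementary argument, outside the scope of this algebraic proof, is needed to verify that each surviving candidate is actually realised by some Riemannian manifold — for the exceptional entries this is due to Bryant (locally) and Joyce (compactly).
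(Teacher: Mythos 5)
The paper does not prove this theorem: it is quoted verbatim from the literature (Berger, via Baum's survey) as background for the classification results used later, so there is no proof of record to compare yours against. That said, your outline is a faithful description of the standard architecture of Berger's argument: Ambrose--Singer to realise $\mathfrak{hol}$ as the algebra generated by curvature endomorphisms, the two Berger criteria (the span condition on $K(\mathfrak{h})$ and the non-vanishing of $K^{1}(\mathfrak{h})$ coming from non-local-symmetry via the second Bianchi identity), and then a case-by-case elimination over irreducibly acting subgroups. Two caveats. First, your sketch silently defers the entire substantive content of the theorem to ``representation-theoretic bookkeeping''; that elimination is the proof, occupies dozens of pages in Besse or Joyce, and also requires a preliminary fact you do not mention, namely that an irreducible Riemannian holonomy group is compact (Borel--Lichnerowicz), without which the reduction to the Cartan classification of compact real forms does not get off the ground. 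Second, a historical accuracy point: Berger's original criteria do not quite ``settle the dust'' on the list in Table \ref{table:bergerriemannian} --- his 1955 paper left $\text{Spin}(9) \subset SO(16)$ as a candidate, and its removal (it forces local symmetry) is due to Alekseevskii and to Brown--Gray, so a proof of the theorem \emph{as stated} needs that additional argument. Your closing remark that realisability of the exceptional entries (Bryant, Joyce) is a separate matter is correct and is indeed outside what the stated theorem asserts.
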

With this motivation given, we will now describe the construction of our primary object of study, the conformal holonomy group.  The construction follows in a similar manner to the metric holonomy case above, and our presentation follows \cite{leitner}.  Consider a semi-Riemannian conformal manifold $(M,[g])$.  For some tractor $T_{o} \in \mathcal{T}_{x}$ and a loop $\gamma \in \mathcal{L}(M,x)$, we see from our previous discussion that there exists a parallel tractor field $T$ along $\gamma$, in other words: $(\nabla^{\mathcal{T}} T_{o}/\ud t)(t) =0$ for all $t \in [a,b]$. The parallel transport ${\mathscr{P}}^{\nabla^{\mathcal{T}}}_{\gamma}: \mathcal{T}_{x} \to \mathcal{T}_{x}$ is an automorphism, and since the tractor connection $\nabla^{\mathcal{T}}$ preserves the tractor metric $\la \cdot, \cdot \ra$, the set of these automorphisms ${\mathscr{P}}^{\nabla^{\mathcal{T}}}_{\gamma}$ is a subgroup of $O(\mathcal{T}_{x}, \la \cdot, \cdot \ra_{x})$.  Consequently, the holonomy group of the tractor connection is defined as
\begin{equation}
 \text{Hol}_{x}(\mathcal{T}, \nabla^{\mathcal{T}}) := \left\{ {\mathscr{P}}^{\nabla^{\mathcal{T}}}_{\gamma} ~\Big|~ \gamma \in {\mathcal{L}}(M, x) \right\} \subset O \left( \mathcal{T}_{x}, \la \cdot, \cdot \ra_{x} \right).
\end{equation}
For brevity however, and following the standard terminology, the conformal holonomy group (with respect to the base point $x$) is defined to be the holonomy group of the tractor connection:
\begin{equation}
\text{Hol}_{x}(M,[g]) :=  \text{Hol}_{x}(\mathcal{T}, \nabla^{\mathcal{T}})  \subset O \left( \mathcal{T}_{x}, \la \cdot, \cdot \ra_{x} \right).
\end{equation}
As previous, we omit the basepoint and consider the conformal holonomy group $\text{Hol}(M,[g])$ to be defined upto conjugation.  Choosing an orthogonal basis for the fibre $\mathcal{T}_{x}$, the conformal holonomy group may be regarded as a subgroup of the semi-orthogonal group:
\begin{equation}\label{eq:defconfhol}
\text{Hol}(M,[g]) \subset O(p+1,q+1),
\end{equation}
analogous to the metric holonomy group of a semi-Riemannian manifold.  Our results are obtained by studying the action of the conformal holonomy group on its holonomy representation, the conformal holonomy representation\footnote{For brevity, we will sometimes refer to the conformal holonomy representation simply as the \emph{holonomy representation} when the context is appropriate.}, and so the following facts will be useful:
\begin{corollary}[\cite{baumreview}]\label{baumholinv}
Let $\text{Hol}(E, \nabla^{E})$ be a holonomy group and $V$ a proper subspace of the holonomy representation.  If $V$ is $\text{Hol}(E, \nabla^{E})$-invariant then the orthogonal complement $V^{\perp}$ is $\text{Hol}(E, \nabla^{E})$-invariant as well.  Furthermore, if $V$ is non-degenerate then $V^{\perp}$ is non-degenerate as well and the holonomy representation is decomposable.
\end{corollary}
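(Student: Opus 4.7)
The plan is to exploit the fact that $\nabla^{E}$ is compatible with the fibre inner product, so parallel transport is an isometry: for every $\mathscr{P} \in \text{Hol}(E, \nabla^{E})$ and every $u, v \in E_{x}$ one has $\langle \mathscr{P}(u), \mathscr{P}(v) \rangle = \langle u, v \rangle$. The second fact I would use is that $\text{Hol}(E, \nabla^{E})$ is a group rather than a mere set of linear maps: in particular, $\mathscr{P}^{-1} \in \text{Hol}(E, \nabla^{E})$ for each $\mathscr{P}$, so holonomy invariance of $V$ automatically gives $\mathscr{P}^{-1}(V) \subseteq V$, which is the form in which I want to use it. Beyond these two ingredients, the argument is linear algebra.

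To prove that $V^{\perp}$ is holonomy-invariant, I would fix $v \in V^{\perp}$, a holonomy element $\mathscr{P}$, and an arbitrary $w \in V$, and then compute
\begin{equation*}
\langle \mathscr{P}(v),\, w \rangle \;=\; \langle \mathscr{P}(v),\, \mathscr{P}(\mathscr{P}^{-1}(w)) \rangle \;=\; \langle v,\, \mathscr{P}^{-1}(w) \rangle \;=\; 0,
\end{equation*}
the last equality because $\mathscr{P}^{-1}(w) \in V$ by invariance of $V$ and $v \in V^{\perp}$. Since $w$ was arbitrary, $\mathscr{P}(v) \in V^{\perp}$, so $\mathscr{P}(V^{\perp}) \subseteq V^{\perp}$, and $V^{\perp}$ is holonomy-invariant.

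For the non-degeneracy claim, assume $V$ is non-degenerate; then standard linear algebra gives the orthogonal decomposition $E_{x} = V \oplus V^{\perp}$ with $V \cap V^{\perp} = \{0\}$. If $u \in V^{\perp}$ satisfies $\langle u, v \rangle = 0$ for every $v \in V^{\perp}$, then combining with $\langle u, w \rangle = 0$ for every $w \in V$ (which holds by the very definition of $V^{\perp}$) and decomposing an arbitrary element of $E_{x}$ into its $V$ and $V^{\perp}$ pieces shows $\langle u, \cdot \rangle \equiv 0$ on $E_{x}$. Non-degeneracy of the ambient inner product then forces $u = 0$, so $V^{\perp}$ is non-degenerate. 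Since $E_{x}$ is now written as an orthogonal direct sum of two non-degenerate, holonomy-invariant subspaces, the holonomy representation is decomposable by definition.

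I do not expect any real obstacle; the content is essentially a book-keeping exercise once metric compatibility is in hand. The only subtlety worth flagging is the use of $\mathscr{P}^{-1}$ in the invariance step: a naive attempt that writes $\langle \mathscr{P}(v), \mathscr{P}(w) \rangle = \langle v, w \rangle = 0$ proves $\mathscr{P}(v) \perp \mathscr{P}(V)$ rather than $\mathscr{P}(v) \perp V$, and one needs the group structure of $\text{Hol}(E, \nabla^{E})$ to close the argument.
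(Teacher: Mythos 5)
Your proof is correct. The paper does not actually prove this corollary --- it imports it from Baum's review \cite{baumreview} with no argument --- and what you have written is precisely the standard proof that source would give: metric compatibility of $\nabla^{E}$ makes each $\mathscr{P}^{\nabla^{E}}_{\gamma}$ an isometry, the group structure supplies $\mathscr{P}^{-1}$ so that $\langle \mathscr{P}(v), w\rangle = \langle v, \mathscr{P}^{-1}(w)\rangle = 0$, and non-degeneracy of $V$ yields the orthogonal splitting $E_{x} = V \oplus V^{\perp}$ from which non-degeneracy of $V^{\perp}$ and decomposability follow. Your closing remark about the naive computation $\langle \mathscr{P}(v), \mathscr{P}(w)\rangle = 0$ only giving $\mathscr{P}(v) \perp \mathscr{P}(V)$ is exactly the right subtlety to flag.
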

Notice that for a semi-Riemannian conformal manifold of signature $(p,q)$, the holonomy representation $\mathcal{T}_{x} = \mathbb{R}^{p+1,q+1}$ and so degenerate, holonomy invariant subspaces $Z \subset \mathcal{T}_{x}$ can (and do) exist. The existence of a degenerate, holonomy invariant subspace of the conformal holonomy representation imposes stringent conditions on the geometry of the conformal manifold, and so these subspaces are by no means generic.   The following fact is useful for dealing with such degenerate subspaces:
\begin{corollary}[\cite{galaevleistner}]\label{thm-degeneratesubspaces}
Let $\text{Hol}(E, \nabla^{E})$ be a holonomy group and $Z$ a degenerate, proper $\text{Hol}(E, \nabla^{E})$-invariant subspace of the holonomy representation.  Then $Z \cap Z^{\perp}$ is a totally isotropic, $\text{Hol}(E, \nabla^{E})$-invariant subspace.
\end{corollary}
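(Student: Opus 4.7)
The plan is to deduce the claim from the previous corollary (Corollary \ref{baumholinv}) together with elementary bilinear-algebra definitions; no deep input is required, so the task is really one of carefully chasing what the two words ``invariant'' and ``totally isotropic'' mean.

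First I would observe that, since $Z$ is by hypothesis a $\text{Hol}(E,\nabla^{E})$-invariant subspace of the holonomy representation, Corollary \ref{baumholinv} applies and immediately yields that the orthogonal complement $Z^{\perp}$ (with respect to the parallel-transport-preserved inner product on the fibre) is also $\text{Hol}(E,\nabla^{E})$-invariant. The intersection of two invariant subspaces of a representation is again invariant: if $\mathscr{P}^{\nabla^{E}}_{\gamma}(Z)=Z$ and $\mathscr{P}^{\nabla^{E}}_{\gamma}(Z^{\perp})=Z^{\perp}$ for every loop $\gamma\in\mathcal{L}(M,x)$, then $\mathscr{P}^{\nabla^{E}}_{\gamma}(Z\cap Z^{\perp})=Z\cap Z^{\perp}$ by taking the intersection of images. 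This disposes of the invariance half of the statement.

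Next I would check total isotropy. Let $v,w\in Z\cap Z^{\perp}$ be arbitrary. Since $w\in Z$ and $v\in Z^{\perp}$, by definition of the orthogonal complement we have $\langle v,w\rangle=0$. As $v$ and $w$ were arbitrary elements of $Z\cap Z^{\perp}$, the restriction of the fibre inner product to $Z\cap Z^{\perp}$ vanishes identically, which is exactly the definition of a totally isotropic subspace. I would also remark that the hypothesis that $Z$ is \emph{degenerate} (rather than merely invariant) guarantees that $Z\cap Z^{\perp}\neq\{0\}$, so the statement is not vacuous; this is the unique place where the degeneracy assumption actually enters.

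There is no real obstacle here: the only subtle point is making sure that one has the right notion of ``orthogonal complement'' in play, namely the one defined using the holonomy-invariant bilinear form on the fibre $E_{x}$ whose preservation under parallel transport is what underwrites Corollary \ref{baumholinv} in the first place. Once that is in hand, the argument is a two-line bookkeeping exercise, and I would present it in exactly that order: invariance from the previous corollary plus closure of invariance under intersection, then total isotropy straight from the definition of $Z^{\perp}$.
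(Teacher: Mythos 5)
Your argument is correct and complete: invariance of $Z^{\perp}$ follows from Corollary \ref{baumholinv}, invariance passes to the intersection since each parallel transport is a linear isomorphism preserving both subspaces, and total isotropy of $Z \cap Z^{\perp}$ is immediate from the definition of the orthogonal complement, with degeneracy of $Z$ ensuring $Z \cap Z^{\perp} \neq \{0\}$. The paper itself offers no proof --- it cites the result from the literature --- so there is nothing to compare against, but this is precisely the standard argument one would expect.
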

For the conformal holonomy representation $\mathcal{T}_{x} = \mathbb{R}^{p+1,q+1}$, notice that the dimension of a putative totally isotropic subspace $Z \cap Z^{\perp}$ ranges from 1 to $\text{min}\{ p + 1, q + 1 \}$.  For example,  the holonomy representation $\mathbb{R}^{1,n+1}$ of a Riemannian conformal manifold $(M^{n},[g])$ admits at most null $\text{Hol}(M^{n},[g])$-invariant lines.  On the other hand, the holonomy representation $\mathbb{R}^{2,n}$ of a Lorentzian conformal manifold $(M^{1,n-1},[g])$ can admit totally isotropic $\text{Hol}(M^{1,n-1},[g])$-invariant planes as well as null $\text{Hol}(M^{1,n-1},[g])$-invariant lines. Since the classification of conformal holonomy relies on results from the classification of semi-Riemannian metric holonomy, it too is complicated by the existence of totally isotropic holonomy invariant subspaces. \\ \\
We say that a conformal manifold $(M^{p,q},[g])$ of dimension $\geq 3$ is conformally indecomposable if the tractor connection preserves at most a single line bundle and its orthogonal complement, but nothing else \cite{armstrong}.  When a conformal manifold is conformally decomposable (\emph{i.e.} the tractor connection preserves a $k \geq 2$-dimensional non-degenerate holonomy invariant subspace) it is decomposed into a product of Einstein spaces.  This may be viewed as a conformal holonomy analogue of the de Rham splitting theorem:
\begin{theorem}[\cite{baumjuhl}]\label{einsteinproductifinvsubspace}
Let $(M^{p,q},c)$ be a simply connected conformal manifold of dimension $n \geq 3$ and suppose that there exists a $k$-dimensional non-degenerate $\text{Hol}(M^{p,q},c)$-invariant subspace $V^{k} \subset \mathcal{T}_{x}$, where $2 \leq k \leq n$.  Then, any point of a certain open dense subset $\wt{M}^{p,q} \subset M^{p,q}$ has a neighbourhood $U(y)$ with a metric $g \in c|_{U(y)}$, such that $(U(y),g)$ is isometric to a product $(N_{1}, g_{1}) \times (N_{2}, g_{2})$, where $(N_{i}, g_{i})$ are Einstein spaces of dimension $k-1$ and $n-(k-1)$, respectively.  If $k \neq 2, n$ then the scalar curvatures satisfy
\begin{equation}
\mathsf{scal}(g_{1}) = -\frac{(k-1)(k-2)}{(n-k+1)(n-k)}\mathsf{scal}(g_{2}) \neq 0.
\end{equation}
\hfill $\square$
\end{theorem}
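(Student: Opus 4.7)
The plan proceeds in three stages: extract a parallel orthogonal splitting of the tractor bundle from the hypothesis, integrate this algebraic splitting to a local isometric product of Einstein spaces on an open dense subset, and compute the scalar-curvature balance directly. The first and last stages are essentially routine given the machinery already assembled in the excerpt; the conceptual weight lies in the integration step. For the first stage, I would invoke the holonomy principle: since $M$ is simply connected, the $k$-dimensional non-degenerate $\text{Hol}(M^{p,q},c)$-invariant subspace $V^{k}\subset\mathcal{T}_{x}$ extends uniquely to a $\nabla^{\mathcal{T}}$-parallel rank-$k$ subbundle $\mathcal{V}\subset\mathcal{T}$, and Corollary~\ref{baumholinv} supplies the parallel non-degenerate orthogonal complement $\mathcal{V}^{\perp}$, yielding a $\nabla^{\mathcal{T}}$-parallel orthogonal decomposition $\mathcal{T}=\mathcal{V}\oplus\mathcal{V}^{\perp}$.

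The second stage is the heart of the argument and is a tractor-bundle analogue of the de Rham splitting theorem. The projection of the tractor splitting onto the middle slot of \eqref{eq:confhol-tractorbundle} carves $TM$ into complementary distributions $D_{1},D_{2}$ of ranks $k-1$ and $n-k+1$. Integrability of each $D_{i}$ and the Einstein property of the induced leaf metrics are both to be read off from \eqref{eq:tractorconn}: parallelness of $\mathcal{V}$ and $\mathcal{V}^{\perp}$ forces the Schouten-twisted coupling between the middle and bottom slots to respect the splitting, which in turn forces Lie brackets of $D_{i}$-sections to remain in $D_{i}$ and each leaf to satisfy the Einstein equation in the scale read off from the primary slot of a local parallel section of $\Lambda^{k}\mathcal{V}$, respectively $\Lambda^{n+2-k}\mathcal{V}^{\perp}$ (such local parallel sections exist because parallel transport on these rank-one sub-line-bundles is the determinant of the restricted parallel transport on a metric-preserving bundle). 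A Frobenius argument then yields local leaves $N_{1}$, $N_{2}$ whose product carries a metric in the conformal class, defined on the open dense $\wt{M}^{p,q}$ where the extracted conformal scales are non-vanishing. This is where the genuine work lies and where I expect the main obstacle to be encountered: the tractor splitting is algebraic pointwise, and upgrading it to a geometric product requires delicate bookkeeping of the weighted pieces of \eqref{eq:tractorconn}, together with a careful argument that the two foliations really do fit together isometrically.

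For the final stage, once a neighbourhood $U(y)\cong(N_{1},g_{1})\times(N_{2},g_{2})$ with each factor Einstein has been produced, the scalar-curvature identity is a routine computation. The Ricci tensor of the product is block-diagonal, so the Schouten tensor \eqref{eq:schouten} splits as $\mathsf{P}=\mathsf{P}_{1}\oplus\mathsf{P}_{2}$; matching the two computations of the Schouten trace $\mathsf{J}$ (which must agree, arising from the single conformal metric $\boldsymbol{g}$) and inserting $\dim N_{1}=k-1$ and $\dim N_{2}=n-k+1$ delivers the stated relation
\begin{equation*}
\mathsf{scal}(g_{1})=-\frac{(k-1)(k-2)}{(n-k+1)(n-k)}\mathsf{scal}(g_{2}),
\end{equation*}
with non-vanishing in the case $k\neq 2,n$ following from Theorem~\ref{thm-paralleleinstein} applied to the parallel tractors constructed in the second stage.
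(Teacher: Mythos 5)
First, note that the paper itself offers no proof of this statement: it is imported verbatim from Baum--Juhl \cite{baumjuhl} (the terminal $\square$ marks a quoted result, not an argument), so there is nothing internal to compare your sketch against; it can only be judged on its own merits. On those merits, your three-stage outline does match the strategy of the standard proof (holonomy principle $\to$ parallel orthogonal splitting $\mathcal{T}=\mathcal{V}\oplus\mathcal{V}^{\perp}$ $\to$ de Rham--type integration $\to$ curvature bookkeeping), but the decisive middle stage is described rather than proved, and the two concrete mechanisms you do propose are not correct as stated. The claim that ``the projection of the tractor splitting onto the middle slot of \eqref{eq:confhol-tractorbundle} carves $TM$ into distributions of ranks $k-1$ and $n-k+1$'' is not well posed: the middle slot is not conformally invariant (it depends on a choice of $g\in c$), and the rank of the projection of $\mathcal{V}$ onto it depends on how $\mathcal{V}$ sits relative to the canonical filtration of $\mathcal{T}$ --- it can be $k$, $k-1$ or $k-2$. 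The actual construction first decomposes the canonical tractor $X$ as $X_{\mathcal{V}}+X_{\mathcal{V}^{\perp}}$, uses the densities $\langle X_{\mathcal{V}},X_{\mathcal{V}}\rangle=-\langle X_{\mathcal{V}^{\perp}},X_{\mathcal{V}^{\perp}}\rangle\in\Gamma(\mathcal{E}[2])$ to single out both the open dense set $\wt{M}$ and a preferred scale, and only then obtains the distributions as $\Pi(\mathcal{V}\cap X^{\perp})$ and $\Pi(\mathcal{V}^{\perp}\cap X^{\perp})$. Relatedly, the primary slot of a parallel section of $\Lambda^{k}\mathcal{T}$ is a weighted $(k-1)$-form (a conformal Killing form), not a conformal scale, so ``the scale read off from the primary slot of $\Lambda^{k}\mathcal{V}$'' does not exist as described.

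The final stage also would not deliver the stated identity as written. For the product metric $\mathsf{J}=\mathrm{tr}_{\boldsymbol{g}}\mathsf{P}$ is a single scalar; there are not ``two computations of $\mathsf{J}$'' to be matched, and block-diagonality of $\mathsf{P}$ alone imposes no relation between $\mathsf{scal}(g_{1})$ and $\mathsf{scal}(g_{2})$. The relation comes from the stronger condition that parallelism of $\mathcal{V}$ under \eqref{eq:tractorconn} forces, namely $\mathsf{P}|_{N_{1}}=\lambda g_{1}$ and $\mathsf{P}|_{N_{2}}=-\lambda g_{2}$ for a single constant $\lambda$; writing each block of the Schouten tensor of the product via \eqref{eq:schouten} in terms of $\mathsf{scal}(g_{1})$, $\mathsf{scal}(g_{2})$, $n_{1}=k-1$ and $n_{2}=n-k+1$ and imposing that sign-balance is what yields
\begin{equation*}
\mathsf{scal}(g_{1})=-\frac{(k-1)(k-2)}{(n-k+1)(n-k)}\,\mathsf{scal}(g_{2}),
\end{equation*}
with non-vanishing for $k\neq 2,n$ because $\lambda=0$ would force $\mathsf{P}=0$ and hence a degenerate invariant subspace, contradicting non-degeneracy of $V^{k}$. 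In short: the skeleton is right, but the step you yourself flag as ``where the genuine work lies'' is exactly the content of the theorem, and the shortcuts offered in its place (middle-slot projection, matching $\mathsf{J}$) would both fail if executed literally.
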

We now recall a remarkable (and very useful) fact in conformal holonomy theory:  Einstein metrics in the conformal class of a conformal manifold are in bijective correspondence with holonomy invariant lines in the holonomy representation.  The precise relationship is given by the well-known theorem below:
\begin{theorem}[\cite{alt}]\label{alt}
Let $(M,c)$ be an $n$-dimensional conformal manifold of signature $(p,q)$ and $\wt{M}$ the open dense subset on which the conformal scale is non-vanishing.  Then  $\text{Hol}(M,[g])$-invariant lines $\mathbb{R}V \subset \mathbb{R}^{p+1,q+1}$ are in bijective correspondence with Einstein metrics $g \in c|_{\wt{M}}$ defined up to singularity on $M$.  Under this correspondence, we have:
\begin{equation}
\begin{array}{l}
\mathsf{scal}(g) > 0 \iff \la V,V \ra < 0, \\
\mathsf{scal}(g) = 0 \iff \la V,V \ra = 0, \\
\mathsf{scal}(g) < 0 \iff \la V,V \ra > 0.
\end{array}
\end{equation}
\hfill $\square$
\end{theorem}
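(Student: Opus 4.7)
The plan is to deduce Theorem \ref{alt} from the parallel-tractor characterisation of conformally Einstein metrics provided by Theorem \ref{thm-paralleleinstein}, by treating a holonomy-invariant line essentially as a local version of a $\nabla^{\mathcal{T}}$-parallel tractor. In one direction a parallel tractor manifestly spans a holonomy-invariant line; in the other, an invariant line gives a parallel line subbundle of $\mathcal{T}$, which (because it has rank one) admits local parallel sections to which the converse of Theorem \ref{thm-paralleleinstein} can be applied.

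The forward implication is short: given an Einstein metric $g \in c|_{\wt{M}}$, Theorem \ref{thm-paralleleinstein} supplies a non-trivial $\nabla^{\mathcal{T}}$-parallel tractor $V$, and its value at the basepoint $x$ spans an immediate $\text{Hol}(M,[g])$-invariant line $\mathbb{R}V \subset \mathcal{T}_x$. The sign statement reduces to an explicit calculation performed only once: in the scale defined by $g$ itself (so $\sigma = 1$ and $X^{a}=0$), the parallel equations in \eqref{eq:tractorconn} force $y = -\mathsf{J}/n$, where $\mathsf{J} = \mathsf{scal}(g)/\bigl(2(n-1)\bigr)$ is constant by Einsteinness and $\mathsf{P}_{ab} = (\mathsf{J}/n)\boldsymbol{g}_{ab}$. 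Substituting into \eqref{eq:tractormetric} gives
\begin{equation*}
\la V, V \ra \;=\; 2\sigma y + X^{a}X_{a} \;=\; -\,\frac{\mathsf{scal}(g)}{n(n-1)},
\end{equation*}
and all three sign equivalences follow from this single identity.

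For the reverse implication, which is the substance of the theorem, I would parallel-transport a chosen representative $V$ of the invariant line along curves to produce a $\nabla^{\mathcal{T}}$-invariant rank-one subbundle $L \subset \mathcal{T}$. Because $L$ has rank one and the holonomy of $\nabla^{\mathcal{T}}$ preserves both $L$ and the tractor metric, the connection induced on $L$ has trivial local holonomy, so $L$ admits a local $\nabla^{\mathcal{T}}$-parallel section $\wt V$ extending $V$ on a neighbourhood of $x$. Applying the converse half of Theorem \ref{thm-paralleleinstein} to $\wt V$ then produces an Einstein metric on the open dense subset where the primary slot of $\wt V$ is non-vanishing, which is precisely the ``Einstein metric defined up to singularity'' appearing in the statement.

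The main obstacle I foresee is the step producing a parallel section from a holonomy-invariant line when $V$ is null: in that case parallel transport around a loop need only preserve $\mathbb{R}V$ as a set and may rescale $V$ by a non-unit scalar, so the principal $\mathbb{R}^{*}$-bundle of non-zero elements of $L$ need not be trivial. This ambiguity is in fact harmless for the correspondence, since any two local parallel sections of $L$ differ only by a locally constant factor and hence determine the same conformal scale up to a positive constant, so the same Einstein metric up to constant rescaling. Bijectivity is then completed by checking that the two constructions are mutually inverse modulo exactly this $\mathbb{R}^{*}$-rescaling freedom within $\mathbb{R}V$.
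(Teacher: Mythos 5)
First, note that the paper itself offers no proof of this statement: it is imported verbatim from Alt's thesis and closed with a $\square$, so there is no in-paper argument to compare your route against. Judged on its own terms, your forward direction and the sign computation are correct (in the Einstein scale $V=(1,0,-\mathsf{J}/n)$ with $\mathsf{J}=\mathsf{scal}/(2(n-1))$, giving $\la V,V\ra = 2\sigma y = -\mathsf{scal}/(n(n-1))$, consistent with \eqref{eq:tractormetric} and \eqref{eq:tractorconn}), and the reduction of the converse to Theorem \ref{thm-paralleleinstein} is the right idea.

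The genuine gap is in the step ``$L$ has rank one and the holonomy preserves $L$ and the tractor metric, so the induced connection on $L$ has trivial local holonomy.'' That inference is valid only when $L$ is non-degenerate: there, preservation of $\la V,V\ra\neq 0$ forces the action on the line to be by $\pm 1$, and connectedness of the reduced holonomy gives $+1$, so local parallel sections exist. When $\mathbb{R}V$ is null --- exactly the case $\mathsf{scal}(g)=0$ of the theorem --- the restriction of the tractor metric to $L$ is identically zero and imposes no constraint, so a priori one only obtains a \emph{recurrent} section, $\nabla^{\mathcal{T}}\wt V=\alpha\otimes\wt V$ for some $1$-form $\alpha$; a rank-one parallel subbundle of a metric connection on an indefinite bundle need not admit any local parallel section unless the induced connection on $L$ is flat (compare recurrent versus parallel null vector fields on Lorentzian manifolds). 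Your closing remark that ``any two local parallel sections differ by a locally constant factor'' addresses uniqueness, not existence, and so does not repair this. Closing the gap requires the specific structure of the normal tractor curvature: in any scale its primary slot vanishes, so if $R^{\mathcal{T}}(X,Y)\wt V=\lambda(X,Y)\wt V$ then $\lambda\,\sigma=0$ where $\sigma$ is the primary part of $\wt V$; on the open dense set $\wt M$ where $\sigma\neq 0$ this kills the curvature of $L$ and one may then rescale $\wt V$ to a genuine parallel tractor and invoke Theorem \ref{thm-paralleleinstein}. Without an argument of this kind the scalar-flat branch of the correspondence --- which is precisely the branch used in the proofs of Theorem \ref{thm:mainresult1} and its analogue for $\mathcal{I}$ to exclude degenerate invariant subspaces --- is not established.
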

With reference to theorem \ref{thm-paralleleinstein}, we see that holonomy invariant lines are in bijective correspondence with parallel tractors.  This is a manifestation of the holonomy principle \cite{baumreview}, which is a bijective correspondence between holonomy invariant tensors and parallel sections of the tensor bundle in question.  The conformal holonomy group of a conformally Einstein space $(M^{p,q},[g])$ in fact coincides with the metric holonomy group of a metric cone over the Einstein space $(M^{p,q},g)$: 
\begin{theorem}[\cite{baumjuhl}]\label{feffermangraham}
Let $(M^{p,q},g)$ be an Einstein space of signature $(p,q)$ and dimension $n = p + q \geq 3$ with scalar curvature $\mathsf{scal}(g) \neq 0$, and let $(C(M^{p,q}),g_{C})$ denote the cone $C(M^{p,q}) := M^{p,q} \times \mathbb{R}^{+}$ over $(M^{p,q},g)$ with Ricci-flat metric
\begin{equation}\label{eq:conemetric}
(g_{C})_{(x,t)} := \text{sign}(\mathsf{scal}(g)) \left( \ud t^{2} + \frac{\mathsf{scal}(g)}{n(n-1)} t^{2} g_{x} \right).
\end{equation}
Then the holonomy groups satisfy
\begin{equation}\label{eq:confholmathcesconehol}
\text{Hol}_{x}(M^{p,q},[g]) = \text{Hol}_{x,1}(C(M^{p,q}),g_{C}).
\end{equation}
\hfill $\square$
\end{theorem}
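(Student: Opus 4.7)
The plan is to leverage the Einstein hypothesis to produce a distinguished parallel tractor, and then identify the orthogonal complement of that tractor in $\mathcal{T}$ with the tangent bundle of the cone along a natural slice. Because $g$ is Einstein with $\mathsf{scal}(g)\neq 0$, Theorem~\ref{alt} (equivalently Theorem~\ref{thm-paralleleinstein}) provides a non-trivial $\nabla^{\mathcal{T}}$-parallel section $V\in\Gamma(\mathcal{T})$ with $\la V,V\ra\neq 0$ and of sign opposite to $\mathsf{scal}(g)$. Consequently $\text{Hol}_{x}(M^{p,q},[g])\subset O(p+1,q+1)$ fixes $V$ and preserves the rank-$(n+1)$ sub-bundle $V^{\perp}\subset\mathcal{T}$. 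A signature count shows that $V^{\perp}$ carries a non-degenerate fibre metric of the same signature as the cone metric $g_{C}$ defined in \eqref{eq:conemetric}: for $\mathsf{scal}(g)>0$ the signature is $(p+1,q)$ (matching $\ud t^{2}+ 2\kappa t^{2} g$ with $2\kappa>0$), and symmetrically $(p,q+1)$ for $\mathsf{scal}(g)<0$.

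I would next construct a bundle isomorphism $\Phi:V^{\perp}\to TC(M^{p,q})|_{M^{p,q}\times\{1\}}$ and extend it radially. In the Einstein scale $\sigma$ (so that $g=\sigma^{-2}\boldsymbol{g}$), the Schouten tensor \eqref{eq:schouten} reduces to $\mathsf{P}_{ab}=\kappa\, g_{ab}$ with $\kappa=\mathsf{scal}(g)/(2n(n-1))$, and imposing $\nabla^{\mathcal{T}}V=0$ in \eqref{eq:tractorconn} determines $V$ explicitly in this scale. A generic tractor $W=(\tau,X^{a},y)^{\top}$ lies in $V^{\perp}$ iff $\tau$ and $y$ satisfy a linear relation whose solution space is parametrised by a pair $(X,s)\in TM\oplus\mathbb{R}$; setting $\Phi(W)=X+s\partial_{t}$ at $t=1$ and extending by the dilation structure of $C(M^{p,q})=M^{p,q}\times\mathbb{R}^{+}$ yields a bundle isomorphism with the cone's tangent bundle over all of $C(M^{p,q})$. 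One verifies directly from the formulas for the tractor metric \eqref{eq:tractormetric} and the cone metric \eqref{eq:conemetric} that $\Phi$ is fibrewise isometric (up to the overall sign $\text{sign}(\mathsf{scal}(g))$ carried by $g_{C}$).

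The crux is to show that under $\Phi$ the restricted tractor connection $\nabla^{\mathcal{T}}|_{V^{\perp}}$ corresponds to the Levi-\v{C}ivit\`a connection $\nabla^{g_{C}}$. This is a slot-by-slot comparison of the three components of \eqref{eq:tractorconn} against the Christoffel symbols of the warped product $g_{C}=\varepsilon\bigl(\ud t^{2}+2\kappa t^{2}g\bigr)$, $\varepsilon=\text{sign}(\mathsf{scal}(g))$: the horizontal Levi-\v{C}ivit\`a piece of $\nabla^{\mathcal{T}}$ covers the tangential Christoffels of the cone, while the $\mathsf{P}_{ab}=\kappa g_{ab}$ and $\delta_{b}^{\p{b}a}$ terms correctly reproduce the mixed and radial Christoffels induced by the warp factor $t^{2}$. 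Once the intertwining is established, \eqref{eq:confholmathcesconehol} follows quickly: $C(M^{p,q})$ deformation-retracts onto the slice $M^{p,q}\times\{1\}$, so every loop based at $(x,1)$ is homotopic to one contained in the slice; parallel transport along such slice-loops in $(C(M^{p,q}),g_{C})$ matches parallel transport for $\nabla^{\mathcal{T}}$ on $V^{\perp}$ under $\Phi$; and the latter group already realises $\text{Hol}_{x}(M^{p,q},[g])$ because the full tractor holonomy fixes $V$ and acts faithfully on $V^{\perp}$. The principal obstacle is the connection-intertwining calculation itself: the Einstein assumption is indispensable, since without $\mathsf{P}_{ab}\propto g_{ab}$ the tractor formula cannot be reorganised as the Christoffels of a warped product, and the rigid tractor derivative will fail to align with the Levi-\v{C}ivit\`a derivative of any metric cone.
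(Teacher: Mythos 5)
First, note that the paper offers no proof of Theorem~\ref{feffermangraham}: it is quoted from \cite{baumjuhl} and closed with a box, so there is nothing internal to compare your argument against. Your outline is nevertheless the standard proof (essentially the Fefferman--Graham cone construction as presented in Baum--Juhl): in the Einstein scale $\mathsf{P}_{ab}=\kappa g_{ab}$ with $\kappa=\mathsf{scal}(g)/(2n(n-1))$, the parallel tractor is $V=(\sigma,0,-\kappa\sigma)^{\perp}$, its orthogonal complement is parametrised by $TM\oplus\mathbb{R}\cong TC(M^{p,q})|_{t=1}$, and the slot-by-slot comparison of \eqref{eq:tractorconn} against the warped-product Christoffels of \eqref{eq:conemetric} is exactly the computation that makes the theorem true. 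All of that is sound, including your signature count for $V^{\perp}$ (the fibrewise identification is an isometry only up to a positive constant as well as the sign $\text{sign}(\mathsf{scal}(g))$, but this is harmless for holonomy).

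The one step that would fail as written is the final passage from the slice to the whole cone: you argue that every loop at $(x,1)$ is homotopic to a slice loop and that parallel transport therefore agrees. Parallel transport is not a homotopy invariant of the loop unless the connection is flat, so deformation-retracting $C(M^{p,q})$ onto $M^{p,q}\times\{1\}$ does not by itself give $\text{Hol}_{x,1}(C(M^{p,q}),g_{C})\subset\text{Hol}_{x}(\mathcal{T},\nabla^{\mathcal{T}})$. The correct mechanism is already latent in your radial extension: one must check that the extended isomorphism identifies $(TC(M^{p,q}),\nabla^{g_{C}})$ with the pullback $\pi^{*}\big(V^{\perp},\nabla^{\mathcal{T}}|_{V^{\perp}}\big)$ along the projection $\pi:C(M^{p,q})\to M^{p,q}$, using $\nabla^{g_{C}}_{\partial_{t}}\partial_{t}=0$ and $\nabla^{g_{C}}_{X}\partial_{t}=t^{-1}X$, so that radial parallel transport is pure dilation. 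For a pullback connection, parallel transport along an arbitrary loop $\gamma$ in $C(M^{p,q})$ depends only on the projected loop $\pi\circ\gamma$, which yields both inclusions at once; the identification of the resulting group with the full conformal holonomy then uses your closing remark that $\text{Hol}_{x}(M^{p,q},[g])$ fixes $V$ and hence is faithfully determined by its action on $V^{\perp}$, since $\mathcal{T}_{x}=\mathbb{R}V\oplus V^{\perp}$.
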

Moreover, the conformal holonomy groups of simply connected, conformally indecomposable Riemannian Einstein spaces were classified by Armstrong \cite{armstrong}:
\begin{theorem}[\cite{baumjuhl}]\label{armstrong}
Let $(M,g)$ be a simply connected, conformally indecomposable Riemannian Einstein space of dimension $n \geq 4$, and let $\text{Hol}(M,[g]) \subset SO^{0}(1,n+1)$ be its conformal holonomy group.
\begin{enumerate}
\item{If $\mathsf{scal}(g) < 0$, then $\text{Hol}(M,[g]) = SO^{0}(1,n)$,}
\item{If $\mathsf{scal}(g) > 0$, then $\text{Hol}(M,[g])$ is one of the following groups:\\
$SO(n+1)$, $SU(\frac{n+1}{2})$, $\text{Sp}(\frac{n+1}{4})$, $G_{2}$ if $n=6$, or $\text{Spin}(7)$ if $n=7$,}
\item{If $\mathsf{scal}(g) = 0$, then $\text{Hol}(M,[g])$ is one of the following groups:\\
$SO(n) \ltimes \mathbb{R}^{n}$, $SU(\frac{n}{2}) \ltimes \mathbb{R}^{n}$, $\text{Sp}(\frac{n}{4}) \ltimes \mathbb{R}^{n}$, $G_{2} \ltimes \mathbb{R}^{n}$ if $n=7$, or $\text{Spin}(7) \ltimes \mathbb{R}^{n}$ if $n=8$.}
\end{enumerate}
\hfill $\square$
\end{theorem}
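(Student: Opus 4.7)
The plan is to combine Theorem \ref{alt} with the Fefferman-Graham cone construction of Theorem \ref{feffermangraham} and then invoke Berger's classification. Theorem \ref{alt} guarantees that the Einstein metric $g$ produces a non-trivial parallel tractor $V \in \Gamma(\mathcal{T})$, so $\text{Hol}(M,[g])$ is contained in the stabiliser of $V$ inside $SO^{0}(1,n+1)$. The three sign cases of $\la V, V \ra$ produce stabilisers isomorphic to $SO^{0}(1,n)$, $SO(n+1)$, and $SO(n) \ltimes \mathbb{R}^{n}$ respectively, which already matches the overall shape of each clause in the statement.

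For $\mathsf{scal}(g) \neq 0$, Theorem \ref{feffermangraham} identifies $\text{Hol}(M,[g])$ with the metric holonomy of the Fefferman-Graham cone $(C(M),g_{C})$; a direct reading of equation (\ref{eq:conemetric}) shows that $g_{C}$ is Riemannian of dimension $n+1$ when $\mathsf{scal}(g)>0$ and Lorentzian of signature $(1,n)$ when $\mathsf{scal}(g)<0$, and in both cases $g_{C}$ is Ricci-flat by construction. Conformal indecomposability of $(M,[g])$, together with Corollary \ref{baumholinv} and Theorem \ref{einsteinproductifinvsubspace}, rules out proper non-degenerate invariant subspaces of the cone holonomy representation beyond those forced by $V$, so the cone holonomy may be taken to act irreducibly on a tangent space. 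In the Riemannian case one then runs through the Berger list of Table \ref{table:bergerriemannian} and retains only those groups whose associated geometry is Ricci-flat, giving $SO(n+1)$, $SU(\tfrac{n+1}{2})$, $\text{Sp}(\tfrac{n+1}{4})$, $G_{2}$ (for $n+1=7$), and $\text{Spin}(7)$ (for $n+1=8$); the locally symmetric case is eliminated because a Riemannian Ricci-flat locally symmetric space is flat. In the Lorentzian case the analogous irreducible Lorentzian Berger list collapses to the single entry $SO^{0}(1,n)$, which gives clause (1).

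The main obstacle is clause (3), where $\mathsf{scal}(g)=0$ forces $V$ to be null and the cone formula (\ref{eq:conemetric}) degenerates, so Theorem \ref{feffermangraham} is unavailable. Here I would work directly inside the stabiliser $SO(n) \ltimes \mathbb{R}^{n}$ of a null line in $\mathbb{R}^{1,n+1}$. Splitting along the invariant null line $\mathbb{R} V$ and its orthogonal complement $V^{\perp}$, which by Corollary \ref{thm-degeneratesubspaces} is also holonomy invariant, induces a representation of $\text{Hol}(M,[g])$ on the $n$-dimensional Euclidean quotient $V^{\perp}/\mathbb{R}V$. A tractor-calculus argument shows that this induced representation arises as the metric holonomy of a transverse Riemannian Ricci-flat structure, so Berger's theorem again forces the linear part of the semidirect product to be one of $SO(n)$, $SU(\tfrac{n}{2})$, $\text{Sp}(\tfrac{n}{4})$, $G_{2}$ (for $n=7$), or $\text{Spin}(7)$ (for $n=8$). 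The subtle point is to verify that the translational $\mathbb{R}^{n}$-part is genuinely present: its absence would produce a second, transverse parallel tractor orthogonal to $V$ and so violate the conformal indecomposability hypothesis, closing the argument.
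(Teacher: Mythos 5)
The paper offers no proof of this statement: it is quoted from \cite{baumjuhl} and is Armstrong's classification \cite{armstrong}, stated with an empty proof box. So there is nothing in the paper to compare against line by line; what follows is an assessment of your outline on its own terms. Your strategy is in fact the standard one (and essentially Armstrong's): pass to the parallel tractor determined by the Einstein scale, identify the stabiliser of $V$ according to the sign of $\la V,V\ra$, use the cone correspondence of theorem \ref{feffermangraham} plus the Ricci-flat Berger list for $\mathsf{scal}(g)\neq 0$, and treat the null case separately via the screen representation. The signature bookkeeping for the cone, the elimination of the locally symmetric and non-Ricci-flat Berger entries, and the use of conformal indecomposability to force irreducibility of the cone holonomy (whence Di~Scala--Olmos, theorem \ref{discalaolmos}, in the Lorentzian case) are all correct.

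Two points are genuinely thin. First, a small one: theorem \ref{alt} produces a holonomy-invariant \emph{line}, not a fixed vector; for a null line a connected holonomy group may act by nontrivial positive scalings, so to get an honest parallel tractor in clause (3) you should invoke theorem \ref{thm-paralleleinstein} rather than \ref{alt}. Second, and more seriously, your treatment of the translational factor in clause (3) does not close the argument. You argue that if the $\mathbb{R}^{n}$-part were \emph{absent} there would be a second parallel tractor orthogonal to $V$, violating indecomposability; that handles only the extreme case of a trivial translational part. The delicate possibility is that the translation subgroup is a proper nonzero subspace $W\subsetneq\mathbb{R}^{n}$, or that the holonomy sits inside $SO(n)\ltimes\mathbb{R}^{n}$ without splitting as a semidirect product of its linear projection with its translational intersection (a ``coupled'' subgroup). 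Excluding these, and identifying the screen representation on $V^{\perp}/\mathbb{R}V$ with the metric holonomy $\text{Hol}(M,g)$ of the Ricci-flat representative, is precisely the technical core of Armstrong's proof and cannot be dispatched by the one-sentence tractor-calculus appeal you give. As written, clause (3) is a correct plan with its hardest step asserted rather than proved.
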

In theorem \ref{armstrong}, $SO^{0}(1,n)$, $SO(n+1)$ and $SO(n) \ltimes \mathbb{R}^{n}$ are the generic conformal holonomy groups, while the other groups in cases \textit{2} and \textit{3} correspond to additional geometric structures on the Einstein space.  A list of these structures may be found in \cite{alt}.

\section{Conformal holonomy of the bivariate Gaussian manifold}\label{section:confholg}
\subsection{Conformal holonomy of the bivariate Gaussian manifold}\label{subsection:confholg}
Let $\left( \mathcal{G}, g \right)$ denote the bivariate Gaussian $5$-manifold with Fischer-Rao metric, $g$, and let $\left( \mathcal{G}, [g] \right)$ denote the associated conformal manifold.  It is a fact that the conformal holonomy group of a conformal manifold is trivial if the manifold is conformally flat.  A necessary and sufficient condition for conformal flatness is that the Weyl tensor vanishes:
\begin{align}\label{eq:weyltensor}
\mathsf{C}_{abcd} =~ &\mathsf{R}_{abcd} - \frac{1}{n-2} \big( \mathsf{Ric}_{ad} g_{bc} -  \mathsf{Ric}_{ac} g_{bd} +  \mathsf{Ric}_{bc} g_{ad} - \mathsf{Ric}_{bd} g_{ac} \big) \nonumber \\ 
&+ \frac{1}{(n-1)(n-2)} \mathsf{scal} \big( g_{ac} g_{bd} - g_{ad} g_{bd} \big) = 0.
\end{align}
By direct calculation, the following facts are immediate:
\begin{proposition}\label{prop:confflat}
$\left( \mathcal{G}, g \right)$ is non-conformally flat.
\end{proposition}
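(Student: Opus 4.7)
The plan is to verify, by direct computation, that the Weyl tensor defined in \eqref{eq:weyltensor} has at least one non-vanishing component. Since $\mathcal{G}$ has dimension $n = 5 \geq 4$, the vanishing of $\mathsf{C}_{abcd}$ is both necessary and sufficient for local conformal flatness, so exhibiting a single nonzero component suffices. All the required ingredients have already been tabulated above: the metric $g_{ab}$, the Riemann components \eqref{eq:fr-riemann}, the Ricci components \eqref{eq:fr-ricci} and the constant scalar curvature $\mathsf{scal} = -\frac{9}{2}$.

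My strategy is to pick an index quadruple for which \eqref{eq:weyltensor} collapses to a handful of terms. The natural candidate is $(a,b,c,d) = (1,2,1,2)$: both $g_{ab}$ and $\mathsf{Ric}_{ab}$ are block diagonal with an especially simple $2\times 2$ mean-coordinate block (only $\sigma_1,\sigma_2,\sigma_{12}$ and $\Delta$ appear), and the $(c,d) = (1,2)$ entry of the tabulated $\mathsf{R}_{12cd}$ is the clean value $-\frac{1}{4\Delta}$. Because the Ricci and metric blocks in this sector are proportional (indeed $\mathsf{Ric}_{ab}|_{a,b \in \{1,2\}} = -\tfrac{1}{2} g_{ab}|_{a,b \in \{1,2\}}$), one anticipates that the Ricci term of \eqref{eq:weyltensor} will collapse into the same algebraic shape as the scalar-curvature term.

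Substituting into \eqref{eq:weyltensor} with $n=5$, so that $\tfrac{1}{n-2} = \tfrac{1}{3}$ and $\tfrac{1}{(n-1)(n-2)} = \tfrac{1}{12}$, is then purely mechanical. Using the identity $\sigma_1\sigma_2 - \sigma_{12}^2 = \Delta$, the Ricci contraction
\[
\mathsf{Ric}_{ad}g_{bc} - \mathsf{Ric}_{ac}g_{bd} + \mathsf{Ric}_{bc}g_{ad} - \mathsf{Ric}_{bd}g_{ac}
\]
and the metric combination $g_{ac}g_{bd} - g_{ad}g_{bc}$ each reduce to multiples of $1/\Delta$, so that $\mathsf{C}_{1212}$ takes the form $C/\Delta$ for a manifestly nonzero rational constant $C$. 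Since $\Delta > 0$ everywhere on $\mathcal{G}$ by positive-definiteness of the covariance matrix, this Weyl component is nowhere vanishing, and the conclusion follows.

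There is no genuine conceptual obstacle here; the only thing to watch is sign bookkeeping, since each tabulated Riemann and Ricci matrix carries an overall minus sign factored outside the bracket, and the antisymmetrization pattern of the Ricci term in \eqref{eq:weyltensor} must be respected. Once those conventions are tracked, the proof amounts to a one-line numerical check.
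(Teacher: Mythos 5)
Your proposal is correct and follows the paper's proof essentially verbatim: both arguments exhibit a single non-vanishing component of the Weyl tensor by direct substitution into \eqref{eq:weyltensor}, the paper choosing $\mathsf{C}_{1234} = -\sigma_{2}/4\Delta^{2}$ where you choose $\mathsf{C}_{1212}$. The one place you are too quick is in declaring the constant $C$ ``manifestly nonzero'' before actually summing the three contributions --- cancellation there is precisely what conformal flatness would produce, so that sum is the entire content of the check --- but with the paper's conventions one gets $\mathsf{C}_{1212} = \bigl(-\tfrac{1}{4}-\tfrac{1}{3}-\tfrac{3}{8}\bigr)\Delta^{-1} = -\tfrac{23}{24}\Delta^{-1} \neq 0$, all three terms having the same sign, so your argument goes through.
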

\begin{proof}
It is sufficient to show that a single component of the Weyl tensor is non-zero to distinguish it from the $0$-tensor. Without loss of generality, consider $\mathsf{C}_{1234}$. Substituting the expressions for the Fischer-Rao metric \eqref{eq:fisherrao}, the Riemann tensor \eqref{eq:fr-riemann} and the Ricci tensor \eqref{eq:fr-ricci} into \eqref{eq:weyltensor} and simplifying the resulting expression yields $\mathsf{C}_{1234} = - \sigma_{2} / 4 \Delta^{2}$, and so $\mathcal{G}$ is non-conformally flat as claimed.
\end{proof}
\begin{proposition}\label{prop:notconfeinstein}
$\left( \mathcal{G}, g \right)$ is not a conformally Einstein space.
\end{proposition}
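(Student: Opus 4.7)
The plan is to invoke Theorem~\ref{alt}, which identifies Einstein metrics in the conformal class $[g]$ (on an open dense subset of $\mathcal{G}$) with non-trivial parallel sections of the standard tractor bundle. Writing such a prospective parallel tractor as $V^A = (\sigma, X^a, y)^{\perp}$ in the splitting \eqref{eq:confhol-tractorbundle} and unpacking the tractor parallelism equation read off from \eqref{eq:tractorconn} forces $X^a = \nabla^a \sigma$ and $y = -\tfrac{1}{5}(\Delta \sigma + \mathsf{J}\sigma)$, whilst the $g$-trace-free part of the middle slot collapses the problem to the overdetermined \emph{almost-Einstein} equation
$$\bigl(\nabla_a \nabla_b \sigma + \mathsf{P}_{ab}\,\sigma\bigr)_{0} \;=\; 0,$$
where $(\cdot)_0$ denotes the $g$-trace-free part, $\mathsf{P}_{ab} = \tfrac{1}{3}(\mathsf{Ric}_{ab} - \mathsf{J}\,g_{ab})$ is the Schouten tensor, and $\mathsf{J} = \mathsf{scal}/(2(n-1)) = -9/16$. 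Hence the claim is equivalent to showing this linear system admits no non-trivial solution $\sigma$.

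First I would assemble $\mathsf{P}_{ab}$ explicitly from \eqref{eq:fr-ricci}, then exploit the prolongation viewpoint: the almost-Einstein equation extends \emph{exactly} to the tractor parallelism, so any solution $\sigma$ is uniquely determined by the triple $(\sigma, X^a, y)|_{p_0}$ at a single basepoint $p_0 \in \mathcal{G}$. Consequently a non-trivial solution exists if and only if there is some $0 \ne V_0 \in \mathcal{T}_{p_0}$ annihilated by every tractor curvature endomorphism $R^{\nabla^{\mathcal{T}}}_{ab}|_{p_0}$. These operators take values in $\mathfrak{so}(\mathcal{T}_{p_0}, h) \cong \mathfrak{so}(1,6)$; their middle $5 \times 5$ block is the Weyl tensor (non-vanishing by Proposition~\ref{prop:confflat}) and their off-diagonal blocks are built from the Cotton--York tensor $Y_{abc} = 2\nabla_{[a}\mathsf{P}_{b]c}$.

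I would then fix a convenient basepoint, e.g.\ $p_0 = (\mu_1,\mu_2,\sigma_1,\sigma_2,\sigma_{12}) = (0,0,1,1,0)$, assemble the ten skew-adjoint $7 \times 7$ matrices $R^{\nabla^{\mathcal{T}}}_{ab}|_{p_0}$, and verify that their common kernel in $\mathbb{R}^{1,6}$ is trivial. Since Weyl non-flatness already forces the middle block to act non-trivially on most directions, and the Cotton pieces couple $X^a$ to $\sigma$ and $y$, in practice a small subcollection of the $R^{\nabla^{\mathcal{T}}}_{ab}|_{p_0}$ will suffice. By the holonomy principle the emptiness of this common kernel contradicts the existence of a non-zero parallel tractor, and the proposition follows.

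The principal obstacle is computational rather than conceptual: one must differentiate $\mathsf{P}_{ab}$ to produce the Cotton--York tensor, combine with the Weyl data into the ten skew-adjoint $7 \times 7$ blocks, and certify triviality of their common kernel on $\mathbb{R}^{1,6}$. This linear-algebra reduction is routine but lengthy and is naturally delegated to a computer algebra system; the only real risk is an unexpected algebraic coincidence at $p_0$ making the kernel non-trivial, which would be avoided by repeating the check at a second, generically chosen basepoint.
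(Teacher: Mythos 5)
Your reduction of the Einstein condition to the existence of a non-trivial parallel standard tractor is sound (although the parallel-tractor correspondence is Theorem \ref{thm-paralleleinstein}; Theorem \ref{alt} is phrased in terms of invariant lines), and the prolongation to $X^a=\nabla^a\sigma$, $y=-\tfrac{1}{5}(\square\sigma+\mathsf{J}\sigma)$ together with the trace-free almost-Einstein equation is correct. The genuine gap is in the decisive step. Computing $[\nabla^{\mathcal{T}}_a,\nabla^{\mathcal{T}}_b]$ from \eqref{eq:tractorconn}, the tractor curvature acts on $(\sigma,X^c,y)^{\perp}$ as $\left(0,\ \mathsf{C}_{ab}{}^{c}{}_{d}X^{d}+Y_{ab}{}^{c}\sigma,\ -Y_{abd}X^{d}\right)^{\perp}$: the output does not involve $y$ at all, so the canonical tractor direction $(0,0,1)^{\perp}$ lies in the common kernel of every curvature endomorphism at every point. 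The common kernel is therefore \emph{never} trivial; this is structural, not an ``unexpected algebraic coincidence at $p_0$,'' and checking a second basepoint will not remove it. As stated, your verification cannot succeed.

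The repair is standard but must be made explicit: show that the common kernel is \emph{exactly} the canonical line, and then note that a parallel tractor confined to that line has the form $(0,0,y)^{\perp}$, whose covariant derivative has middle slot $\delta_b{}^{a}y$, forcing $y=0$; alternatively, differentiate the integrability condition $\Omega_{ab}V=0$ to bring in $\nabla_c\Omega_{ab}$ and enlarge the system until the kernel genuinely dies. Either way, the computation you defer to a computer algebra system still has to be performed before this constitutes a proof. For comparison, the paper proceeds much more directly: it substitutes the conformal transformation law for the Ricci tensor into the Einstein condition for $\wh{g}=e^{2\Upsilon}g$ and extracts from two components an identity in the $\sigma_i$ that cannot hold, which is shorter and more elementary, whereas your tractor route, once repaired, has the advantage of producing the standard Gover--Nurowski-type obstructions and fitting naturally into the holonomy analysis of Theorem \ref{thm:mainresult1}.
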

\begin{proof}
Consider the conformal transformation of the Fischer-Rao metric, $g_{ab}$:
\begin{equation}\label{eq:conftransg}
g_{ab} \longmapsto \wh{g}_{ab} = e^{2\Upsilon}g_{ab}.
\end{equation}
We will show that there does not exist $\Upsilon \in C^{\infty}(\mathcal{G})$ such that $\wh{g}_{ab}$ is an Einstein metric. Suppose towards a contradiction that $\wh{g} \in [g]$ is an Einstein metric, which is to say that
\begin{equation}\label{eq:einsteinconditionstar}
\wh{\mathsf{Ric}}_{ab} = \frac{1}{5} \p{.} \wh{\mathsf{scal}} \p{.} \wh{g}_{ab}
\end{equation}
in five dimensions. Substituting the well known formulae for the conformal transformation of the Ricci tensor and scalar curvature into \eqref{eq:einsteinconditionstar} and simplifying yields
\begin{equation}
\mathsf{Ric}_{ab} = 3 \left( \nabla_{a} \Upsilon_{b} - \Upsilon_{a} \Upsilon_{b}  \right) + \left[ \square \Upsilon + 3 \Upsilon^{a} \Upsilon_{a} - \frac{e^{-4 \Upsilon}}{5} \left( \mathsf{scal} - \frac{16}{3} e^{-\frac{3}{2}\Upsilon} \square \left( e^{\frac{3}{2}\Upsilon} \right) \right) \right] g_{ab}.
\end{equation} 
Without loss of generality, evaluating the $11$-component and $33$-components of the Ricci tensor and Fischer-Rao metric yields $\sigma_{2}/\Delta - \sigma_{2}^{\p{2}2}/\Delta^{2} =0$, which clearly has no solutions $\Upsilon$ for $\Delta \neq 0$, and so $g$ is not conformally Einstein as claimed. 
\end{proof}
The following facts are also easily established:
\begin{proposition}\label{prop:confindecomp}
$\left( \mathcal{G}, g \right)$ is conformally indecomposable.
\end{proposition}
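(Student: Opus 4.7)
My plan is to argue by contradiction. First, I would observe that Proposition \ref{prop:notconfeinstein} together with Theorem \ref{alt} already rules out every holonomy invariant line in $\mathcal{T}_x$, since such a line would correspond to an Einstein metric in $[g]$. Combined with Corollary \ref{thm-degeneratesubspaces}---whose isotropic intersection $Z \cap Z^{\perp}$ of any degenerate invariant subspace is itself a holonomy-invariant subspace, and must be a null line in Riemannian signature---this simultaneously rules out all degenerate holonomy invariant subspaces. Thus what remains is to exclude non-degenerate holonomy invariant subspaces of dimension $k$ with $2 \leq k \leq 5$; by passing to orthogonal complements, it is enough to treat the cases $k \in \{2,3\}$.

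Suppose such a $V^k \subset \mathcal{T}_x$ exists. Theorem \ref{einsteinproductifinvsubspace} then produces a non-empty open $U \subset \mathcal{G}$ carrying a metric $\hat{g} = e^{2\Upsilon} g \in [g]|_U$ locally isometric to a product $(N_1,g_1) \times (N_2,g_2)$ of Einstein factors of dimensions $(n_1,n_2) = (k-1,6-k) \in \{(1,4),(2,3)\}$, with $TU = \mathcal{D}_1 \oplus \mathcal{D}_2$ a $\hat{g}$-orthogonal splitting by $\hat{\nabla}$-parallel distributions. Since the $(1,3)$-Weyl tensor is conformally invariant, the product structure of $\hat{g}$ descends to a rigid block form of $\mathsf{C}^g$: a short Schouten-Weyl computation on a product of Einstein spaces yields
\begin{equation*}
\mathsf{C}(X_1,Y_2,Z_1,W_2) \;=\; \kappa\, \boldsymbol{g}(X_1,Z_1)\,\boldsymbol{g}(Y_2,W_2)
\end{equation*}
for $X_1,Z_1 \in \mathcal{D}_1$ and $Y_2,W_2 \in \mathcal{D}_2$, with $\kappa$ fixed by the two Einstein constants, while the pure components reduce to the intrinsic Weyl tensors of the two factors. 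Equivalently, viewed as a symmetric endomorphism of $\Lambda^2 T_x\mathcal{G}$, $\mathsf{C}^g$ would have to preserve the induced orthogonal decomposition $\Lambda^2 \mathcal{D}_1 \oplus (\mathcal{D}_1 \wedge \mathcal{D}_2) \oplus \Lambda^2 \mathcal{D}_2$ and act as a scalar on the middle summand.

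The final step is to rule this out by direct computation: using \eqref{eq:fr-riemann}--\eqref{eq:fr-ricci} and the Fisher-Rao metric, I would evaluate $\mathsf{C}^g$ at a generic point of $\mathcal{G}$ and show that no $\boldsymbol{g}$-orthogonal splitting $T_x\mathcal{G} = \mathcal{D}_1 \oplus \mathcal{D}_2$ of either dimension type $(1,4)$ or $(2,3)$ is preserved in the required way. The main obstacle I anticipate is the intrinsic nature of this verification: each partition admits a continuous Grassmannian of candidate splittings, so an invariant algebraic feature of $\mathsf{C}^g$---for instance, its eigenspace structure on $\Lambda^2 T_x\mathcal{G}$, or the rank profile of its contractions with $\boldsymbol{g}$---must be shown to be incompatible with any splitting of the prescribed shape. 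Once this is accomplished, $(\mathcal{G},g)$ has no non-trivial holonomy invariant subspace beyond at most a line and its orthogonal complement, and so is conformally indecomposable as claimed.
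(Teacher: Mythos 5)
Your strategy is sound and, in one important respect, more careful than the paper's own argument. The paper disposes of this proposition in two lines: a conformally decomposable manifold is locally a product of Einstein spaces (Theorem \ref{einsteinproductifinvsubspace}), the Riemann tensor of a product is block diagonal, and the components \eqref{eq:fr-riemann} are not block diagonal. As you implicitly recognise, that shortcut elides two issues: the product metric produced by Theorem \ref{einsteinproductifinvsubspace} is some $\wh{g}=e^{2\Upsilon}g$ whose Riemann tensor is \emph{not} that of $g$ (the Riemann tensor is not conformally invariant), and ``block diagonal'' is only meaningful relative to a splitting of the tangent space that is not known in advance. Your replacement --- pass to the conformally invariant Weyl tensor, derive the rigid block/scalar form it must take on a product of Einstein factors, and then show that $\mathsf{C}^{g}$ admits no orthogonal splitting of type $(1,4)$ or $(2,3)$ realising that form --- is exactly the right way to close those gaps. (Your preliminary paragraph ruling out invariant lines and degenerate subspaces is not needed for indecomposability as defined in the paper, which permits a line and its orthogonal complement, and it duplicates part of the proof of Theorem \ref{thm:mainresult1}; it is harmless but redundant here.)

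The genuine gap is that the decisive step is announced rather than carried out. You say you ``would evaluate $\mathsf{C}^{g}$ at a generic point and show that no $\boldsymbol{g}$-orthogonal splitting \ldots is preserved in the required way,'' and you correctly name the obstacle --- the quantification over a whole Grassmannian of candidate splittings --- but you never exhibit the invariant of $\mathsf{C}^{g}$ (its eigenspace structure on $\Lambda^{2}T_{x}\mathcal{G}$, the rank profile of a contraction, or otherwise) that is incompatible with every splitting of the prescribed shape. That verification is the entire content of the proposition; without it nothing has been proved. A secondary inaccuracy: the ``pure'' components of the Weyl tensor of a product are not simply the intrinsic Weyl tensors of the factors (the trace adjustments involve the total dimension and both Einstein constants), though this does not affect the viability of the approach.
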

\begin{proof}
Recall from theorem \ref{einsteinproductifinvsubspace} that the conformal class of a conformally decomposable manifold is locally represented by a product of Einstein spaces.  Suppose towards a contradiction that there exists a conformal transformation of the Fischer-Rao metric $g \longmapsto \wh{g} = e^{2\Upsilon}g$ such that $\wh{g}$ is a product metric.  The Riemann tensor of a product manifold is necessarily block diagonal, but the non-zero components \eqref{eq:fr-riemann} of $\mathsf{R}$ are not block diagonal, so we arrive at a contradiction and $\left( \mathcal{G}, g \right)$ is conformally indecomposable as claimed.
\end{proof}
\begin{proposition}[\cite{li}]\label{nsimplyconnected}
$\mathcal{G}$ is simply connected.
\end{proposition}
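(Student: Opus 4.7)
The plan is to exhibit $\mathcal{G}$ as a product of two contractible spaces and appeal to the fact that a product of simply connected spaces is simply connected (and indeed any contractible space is simply connected).

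First I would observe that the parameter space underlying $\mathcal{G}$ factors as
\begin{equation*}
\mathcal{G} \;\cong\; \mathbb{R}^{2} \times \mathrm{PD}(2,\mathbb{R}),
\end{equation*}
where the first factor is the mean vector $(\mu_{1},\mu_{2}) \in \mathbb{R}^{2}$ and the second factor is the covariance matrix $\Sigma = \left(\begin{smallmatrix} \sigma_{1} & \sigma_{12} \\ \sigma_{12} & \sigma_{2}\end{smallmatrix}\right) \in \mathrm{PD}(2,\mathbb{R})$. This is immediate from the definition \eqref{eq:defg} of $N^{d}$ in the case $d=2$: the density $p(x,y,\mu_{1},\mu_{2},\sigma_{1},\sigma_{2},\sigma_{12})$ is uniquely determined by the pair $(\mu,\Sigma)$ and conversely, so the parametrisation is a diffeomorphism onto the indicated product.

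Next I would verify that each factor is simply connected. The Euclidean factor $\mathbb{R}^{2}$ is contractible, hence simply connected. For the second factor, $\mathrm{PD}(2,\mathbb{R})$ is an open subset of the three-dimensional real vector space $\mathrm{Sym}(2,\mathbb{R})$ of symmetric $2\times 2$ matrices, and it is convex: if $\Sigma_{0},\Sigma_{1} \in \mathrm{PD}(2,\mathbb{R})$ and $t \in [0,1]$, then for every nonzero $v \in \mathbb{R}^{2}$ one has $v^{\mathrm{T}}\bigl((1-t)\Sigma_{0}+t\Sigma_{1}\bigr)v = (1-t)\,v^{\mathrm{T}}\Sigma_{0}v + t\,v^{\mathrm{T}}\Sigma_{1}v > 0$, so $(1-t)\Sigma_{0}+t\Sigma_{1}\in \mathrm{PD}(2,\mathbb{R})$. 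A convex subset of a real vector space is star-shaped about any of its points and therefore contractible, and in particular simply connected.

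Finally I would conclude by invoking the standard fact that $\pi_{1}(X\times Y) \cong \pi_{1}(X)\times \pi_{1}(Y)$, which gives $\pi_{1}(\mathcal{G}) \cong \pi_{1}(\mathbb{R}^{2})\times \pi_{1}(\mathrm{PD}(2,\mathbb{R})) = 0$. There is no real obstacle in this argument — it is essentially a topological observation — so the only thing to be careful of is making the identification $\mathcal{G} \cong \mathbb{R}^{2}\times \mathrm{PD}(2,\mathbb{R})$ precise from the definition \eqref{eq:defg}.
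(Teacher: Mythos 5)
Your argument is correct and is essentially identical to the paper's own proof: both decompose $\mathcal{G}$ as $\mathbb{R}^{2}\times\mathrm{PD}(2,\mathbb{R})$, establish contractibility of the second factor via convexity, and conclude with $\pi_{1}(X\times Y)\cong\pi_{1}(X)\times\pi_{1}(Y)$. Your write-up is in fact slightly more careful, spelling out the convexity inequality and using the cleaner notation $\mathrm{PD}(2,\mathbb{R})$ where the paper writes $\mathrm{PD}(d,\mathbb{R}^{2})$.
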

\begin{proof}
By \eqref{eq:defg}, $\mathcal{G}$ is homeomorphic to $\mathbb{R}^{2} \times \text{PD}(d,\mathbb{R}^{2})$ and $\mathbb{R}^{2}$ is clearly contractible,  so $\pi_{1}(\mathbb{R}^{2}) = 0$.  Now, for $A, B \in \text{PD}(d,\mathbb{R}^{2})$, $(1-t)A + tB \in \text{PD}(d,\mathbb{R}^{2})~ \forall t \in [0,1]$ and so $\text{PD}(d,\mathbb{R}^{2})$ is convex and therefore contractible. Consequently, $\pi_{1}(\text{PD}(d,\mathbb{R}^{2})) = 0$ and so
\begin{equation}
\pi_{1}(\mathcal{G}) \cong \pi_{1} \left( \mathbb{R}^{2} \times \text{PD}(d,\mathbb{R}^{2}) \right) \cong \pi_{1} \left( \mathbb{R}^{2} \right) \times \pi_{1} \left( \text{PD}(d,\mathbb{R}^{2}) \right)  = 0
\end{equation}
as claimed.
\end{proof}
Lastly, let us recall the classification of Lie groups acting irreducibly on $\mathbb{R}^{1,n+1}$:
\begin{theorem}[\cite{discalaolmos}]\label{discalaolmos}
Let $H \subset SO(1,n+1)$ be a connected Lie group which acts irreducibly on $\mathbb{R}^{1,n+1}$.  Then $H = SO^{0}(1,n+1)$. \hfill $\square$
\end{theorem}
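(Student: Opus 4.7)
The plan is to prove the contrapositive: a connected proper Lie subgroup $H \subsetneq SO^{0}(1,n+1)$ cannot act irreducibly on $V := \mathbb{R}^{1,n+1}$. Since $SO^{0}(1,n+1)$ is realised as the orientation-preserving isometry group of hyperbolic space $\mathbb{H}^{n+1}$ (the upper sheet $\{ v \in V ~:~ \langle v,v \rangle = -1,\, v_{0} > 0 \}$), my strategy is to analyse how $H$ acts on $\mathbb{H}^{n+1}$ and its boundary at infinity $\partial_{\infty}\mathbb{H}^{n+1} \cong S^{n}$, and in each case to exhibit a proper nontrivial $H$-invariant subspace of $V$.

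First I would split according to whether $H$ preserves a null line in $V$. If so, that null line is itself a one-dimensional invariant subspace and reducibility is immediate. If $H$ preserves no null line, then $H$ is reductive: the unipotent radical of any non-reductive subgroup of $O(1,n+1)$ must stabilise a common null direction, since every unipotent element of $O(1,n+1)$ has a null fixed vector and the fixed subspace of a unipotent subgroup is nontrivially null in Lorentzian signature. Schur's lemma applied to the irreducible action then forces the centre of $H$ into the scalars preserving $\langle\cdot,\cdot\rangle$, hence into $\{\pm I\}$, so $H$ is semisimple.

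Second, I would exploit the geometric interpretation. A connected semisimple isometry subgroup $H$ of $\mathbb{H}^{n+1}$ with no fixed point at infinity must, by the geometric trichotomy of Di Scala--Olmos, either (i) fix an interior point $p \in \mathbb{H}^{n+1}$, whence the timelike line $\mathbb{R}\cdot p \subset V$ is $H$-invariant; or (ii) preserve a proper totally geodesic subspace $\mathbb{H}^{k} \subsetneq \mathbb{H}^{n+1}$, whose Minkowski span is a nondegenerate $H$-invariant subspace of type $\mathbb{R}^{1,k} \subset V$; or (iii) act transitively on $\mathbb{H}^{n+1}$. Cases (i) and (ii) yield reducibility directly; in case (iii) the classification of transitive actions on the rank-one symmetric space $\mathbb{H}^{n+1}$ (Karpelevich--Mostow) forces $H$ to coincide with the full connected isometry group $SO^{0}(1,n+1)$, contradicting properness.

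The main obstacle is establishing the semisimple trichotomy used in step two: that a connected semisimple isometry subgroup of $\mathbb{H}^{n+1}$ without a boundary fixed point either fixes an interior point, preserves a proper totally geodesic subspace, or acts transitively. This is the substantive content of Di Scala--Olmos \cite{discalaolmos}; the proof proceeds by taking an orbit of minimal dimension in $\mathbb{H}^{n+1}$ and showing, via convexity of hyperbolic distance functions and an analysis of the second fundamental form of the orbit, that such a minimal orbit must be totally geodesic. Once this geometric fact is in hand, translation back to linear invariant subspaces of $V$ is entirely routine and produces the claimed dichotomy $H = SO^{0}(1,n+1)$.
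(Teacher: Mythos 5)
The paper does not actually prove this statement: it is imported from Di Scala--Olmos \cite{discalaolmos} and used as a black box in the proof of Theorem \ref{thm:mainresult1}, so there is no in-paper argument to compare against line by line. Your sketch is, in outline, a faithful reconstruction of the cited source's strategy --- reduce to a semisimple subgroup, pass to the action on $\mathbb{H}^{n+1}$ and its boundary at infinity, and use the fact that a minimal orbit is totally geodesic to obtain the trichotomy (interior fixed point / invariant totally geodesic subspace / transitivity) --- and you correctly flag that the totally-geodesic-orbit lemma is the substantive content being deferred. Two joints need tightening. First, in the reductivity step you show that the unipotent radical $N$ of $H$ fixes a null line, but irreducibility is only contradicted by an $H$-invariant subspace: you must add that $N$ is normal in $H$, so $H$ preserves the fixed subspace $V^{N}$ (nonzero by Engel's theorem and necessarily degenerate, since a connected unipotent group of isometries of a definite form is trivial), or equivalently that $H$ permutes the boundary fixed points of $N$ and a nontrivial connected unipotent subgroup of $SO(1,n+1)$ fixes a unique point at infinity. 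Second, your case (iii) as literally phrased --- that the classification of transitive actions forces $H$ to be the full connected isometry group --- is false for arbitrary connected subgroups, since the Iwasawa factor $AN$ acts simply transitively on $\mathbb{H}^{n+1}$; the conclusion holds only because you have already reduced to $H$ semisimple with no fixed point at infinity, and the appeal to Karpelevich--Mostow must be made in that restricted form. With those repairs the argument is sound, and its reliance on \cite{discalaolmos} for the deep geometric input mirrors what the paper itself does for the entire theorem.
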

With all of the above understood, we can determine the conformal holonomy group of the bivariate Gaussian manifold:
\begin{theorem}\label{thm:mainresult1}
$\text{Hol}(\mathcal{G}, [g]) = SO^{0}(1,6)$.
\end{theorem}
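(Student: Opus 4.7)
The plan is to show that the conformal holonomy representation on the tractor fibre $\mathcal{T}_x \cong \mathbb{R}^{1,6}$ (which has dimension $n+2 = 7$ since $\dim \mathcal{G} = 5$) is irreducible, and then invoke the Di Scala--Olmos classification (Theorem \ref{discalaolmos}) to pin down $\text{Hol}(\mathcal{G},[g]) = SO^{0}(1,6)$. The four preceding propositions do essentially all of the geometric work; the remaining task is to assemble them via a careful case analysis on candidate invariant subspaces.

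First I would use simple connectedness of $\mathcal{G}$ (Proposition \ref{nsimplyconnected}) to deduce that $\text{Hol}(\mathcal{G},[g])$ is connected, hence contained in $SO^{0}(1,6) \subset O(1,6)$. Next, I would argue that no proper subspace $W \subset \mathcal{T}_x$ is $\text{Hol}(\mathcal{G},[g])$-invariant, via the following exhaustive case analysis: (i) a non-degenerate invariant line would, by Theorem \ref{alt}, correspond to an Einstein metric in $[g]$ with non-zero scalar curvature, contradicting Proposition \ref{prop:notconfeinstein}; (ii) a null invariant line would, again by Theorem \ref{alt}, correspond to a scalar-flat Einstein metric in $[g]$, also contradicting Proposition \ref{prop:notconfeinstein}; (iii) a non-degenerate invariant subspace of intermediate dimension $2 \le \dim W \le 5$ would, by Theorem \ref{einsteinproductifinvsubspace}, locally decompose $(\mathcal{G},[g])$ as a product of Einstein spaces, contradicting Proposition \ref{prop:confindecomp}; (iv) a non-degenerate invariant hyperplane ($\dim W = 6$) would yield a non-degenerate invariant orthogonal line by Corollary \ref{baumholinv}, reducing to case (i); (v) any degenerate invariant subspace $W$ would, by Corollary \ref{thm-degeneratesubspaces}, produce a non-trivial totally isotropic invariant subspace $W \cap W^{\perp}$, which in signature $(1,6)$ can only be a null line, reducing to case (ii).

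Having ruled out every proper invariant subspace, the conformal holonomy representation is irreducible. Since $\text{Hol}(\mathcal{G},[g])$ is then a connected Lie subgroup of $SO(1,6)$ acting irreducibly on $\mathbb{R}^{1,6}$, Theorem \ref{discalaolmos} forces $\text{Hol}(\mathcal{G},[g]) = SO^{0}(1,6)$ as claimed. I expect the main subtlety to be in case (v): one must confirm that the totally isotropic intersection $W \cap W^{\perp}$ is genuinely non-zero whenever $W$ is degenerate, and that the Einstein metric supplied by Theorem \ref{alt}, which in general is only defined on an open dense subset, really is excluded by the pointwise nature of the argument in Proposition \ref{prop:notconfeinstein}. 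All the remaining cases should follow by direct citation of the preceding theorems, so no further curvature computations beyond those already recorded in Section 1 are needed for the proof of Theorem \ref{thm:mainresult1} itself.
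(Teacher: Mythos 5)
Your proposal is correct and follows essentially the same route as the paper's own proof: the identical case analysis on invariant subspaces (lines via Theorem \ref{alt} and Proposition \ref{prop:notconfeinstein}, intermediate dimensions via Theorem \ref{einsteinproductifinvsubspace} and Proposition \ref{prop:confindecomp}, hyperplanes via Corollary \ref{baumholinv}, degenerate subspaces via Corollary \ref{thm-degeneratesubspaces}), followed by connectedness from Proposition \ref{nsimplyconnected} and the Di Scala--Olmos classification. The subtleties you flag in case (v) are real but are handled in the paper exactly as you anticipate.
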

\begin{proof}
Our strategy will be to show that $\text{Hol}(\mathcal{G}, [g]) \subset O(1,6)$ acts irreducibly on $\mathcal{T}_{x} = \mathbb{R}^{1,6}$ so that we may apply classification theorem \ref{discalaolmos} to determine the conformal holonomy group. Therefore, we must show that $\mathcal{T}_{x}$ admits no subrepresentations, or in other words: show that $\mathcal{T}_{x}$ admits no $\text{Hol}(\mathcal{G}, [g])$-invariant subspaces (degenerate or non-degenerate).  We begin with the degenerate case. Suppose towards a contradiction that $\mathcal{T}_{x}$ admits a degenerate, proper $\text{Hol}(\mathcal{G}, [g])$-invariant subspace, $Z$.  Then by corollary \ref{thm-degeneratesubspaces}, $Z \cap Z^{\perp}$ is a null $\text{Hol}(\mathcal{G}, [g])$-invariant line and so by theorem \ref{alt} there exists a scalar-flat Einstein metric on an open, dense subset of $(\mathcal{G}, [g])$.
However, $(\mathcal{G}, [g])$ is not conformally Einstein by proposition \ref{prop:notconfeinstein}, so we arrive at a contradiction and conclude that $\mathcal{T}_{x}$ admits no degenerate, proper $\text{Hol}(\mathcal{G}, [g])$-invariant subspaces.  Similarly, suppose towards a further contradiction that $\mathcal{T}_{x}$ admits a non-degenerate, proper $\text{Hol}(\mathcal{G}, [g])$-invariant line, $L^{1}$.  Then by the same theorem, there exists a non-scalar flat Einstein metric on an open, dense subset of $(\mathcal{G}, [g])$, which we have seen do not exist.  Notice that the orthogonal complement of a putative $6$-dimensional $\text{Hol}(\mathcal{G}, [g])$-invariant subspace is a holonomy invariant line by corollary \ref{baumholinv}, and so we likewise conclude that $\mathcal{T}_{x}$ admits no $6$-dimensional non-degenerate, proper $\text{Hol}(\mathcal{G}, [g])$-invariant subspaces either.  Lastly, and towards a final contradiction, suppose that $\mathcal{T}_{x}$ admits a non-degenerate, proper $\text{Hol}(\mathcal{G}, [g])$-invariant subspace $V^{k} \subset \mathcal{T}_{x}$, where $2 \leq k \leq 5$.  Then, by theorem \ref{einsteinproductifinvsubspace} any point of an open dense subset $\wt{\mathcal{G}} \subset \mathcal{G}$ has a neighbourhood $U(y)$ with a metric $\wt{g} \in [g]|_{U(y)}$ such that $(U(y),\wt{g})$ is isometric to a product $(N_{1}, g_{1}) \times (N_{2}, g_{2})$, where $(N_{i}, g_{i})$ are Einstein spaces of dimension $k-1$ and $5-(k-1)$, respectively. But $(\mathcal{G}, [g])$ is conformally indecomposable by proposition \ref{prop:confindecomp}, and so admits no such product metric and therefore $\mathcal{T}_{x}$ admits no $\text{Hol}(\mathcal{G}, [g])$-invariant subspaces $V^{k}$ for $2 \leq k \leq 5$.  Having shown that $\mathcal{T}_{x}$ admits no subrepresentations, we conclude that $\text{Hol}(\mathcal{G}, [g])$ acts irreducibly on $\mathcal{T}_{x} = \mathbb{R}^{1,6}$.  Now, since $(\mathcal{G}, [g])$ is simply connected by proposition \ref{nsimplyconnected}, $\text{Hol}(\mathcal{G}, [g]) = \text{Hol}^{0}(\mathcal{G}, [g])$ is clearly connected.  Moreover, since simply connected manifolds are necessarily orientable then $\text{Hol}(\mathcal{G}, [g]) \subset SO(1,6) \subset O(1,6)$. Therefore, theorem \ref{discalaolmos} applies and $\text{Hol}(\mathcal{G}, [g]) = SO^{0}(1,6)$ as claimed.
\end{proof}

\subsection{Conformal holonomy of the independence submanifold}
Following \cite{arwini}, the independence submanifold is defined by:
\begin{equation}
\mathcal{I} \subset \mathcal{G}: \sigma_{12} = 0.
\end{equation}
Its density functions are of the form:
\begin{equation}
f(x, y; \mu_{1}, \mu_{2}, \sigma_{1}, \sigma_{2}) = N_{X}(\mu_{1}, \sigma_{1}) \cdot N_{Y}(\mu_{2}, \sigma_{2}),
\end{equation}
which is to say that the random variables $X$ and $Y$ are statistically independent.  Consequently, $\mathcal{I}$ is a direct product whose factors are univariate Gaussian manifolds:
\begin{equation}\label{eq:productofunivariates}
\left\{ N_{X}(\mu_{1}, \sigma_{1}), \mu_{1} \in \mathbb{R}, \sigma_{1} \in \mathbb{R}^{+} \right\} ~\text{and}~ \left\{ N_{Y}(\mu_{2}, \sigma_{2}), \mu_{2} \in \mathbb{R}, \sigma_{2} \in \mathbb{R}^{+} \right\}.
\end{equation}
As a real $4$-manifold, $\mathcal{I}$ is endowed with a natural coordinate system
\begin{equation}
(\theta_{1},\theta_{2},\theta_{3},\theta_{4}) = \left( \frac{\mu_{1}}{\sigma_{1}},  \frac{\mu_{2}}{\sigma_{2}}, -\frac{1}{2\sigma_{1}}, -\frac{1}{2\sigma_{2}} \right),
\end{equation}
with corresponding potential function
\begin{equation}
\varphi(\theta) = \log \left( 2 \pi \sqrt{\Delta} \right) + \Delta \left( \theta_{2}^{\p{2}2}\theta_{3} + \theta_{1}^{\p{1}2} \theta_{4} \right),
\end{equation}
where $\Delta = \frac{1}{4 \theta_{3} \theta_{4}}$.  The Fischer-Rao metric on $\mathcal{I}$ is
\begin{equation}\label{eq:fisherraoindep}
g_{ab} = \left[
\begin{array}{cccc}
\sigma_{1} & 0 & 2 \mu_{1}\sigma_{1} & 0 \\
0 & \sigma_{2} & 0 & 2 \mu_{2}\sigma_{2} \\
2 \mu_{1}\sigma_{1} & 0 & 2\sigma_{1}\left(2\mu_{1}^{\p{1}2} + \sigma_{1} \right) & 0 \\
0 & 2 \mu_{2}\sigma_{2} & 0 & 2\sigma_{2}\left(2\mu_{2}^{\p{2}2} + \sigma_{2} \right)
\end{array}{}
\right],
\end{equation}
and the non-zero independent components of the Levi-\v{C}ivit\`{a} connection are:
\begin{align}
\Gamma_{13,1} &= \sigma_{1}^{\p{1}2},~
\Gamma_{33,1} = 4 \mu_{1} \sigma_{1}^{\p{1}2}, ~ 
\Gamma_{24,2} = \sigma_{2}^{\p{2}2},~
\Gamma_{44,2} = 4 \mu_{2} \sigma_{2}^{\p{2}2},~
\Gamma_{33,3} = 4\sigma_{1}^{\p{1}2} \left( 3 \mu_{2}^{\p{2}2} + \sigma_{1} \right), \nonumber \\
\Gamma_{11}^{\p{11}1} &= -\Gamma_{13}^{\p{13}3} = - \mu_{1},~
\Gamma_{31}^{\p{31}1} = \sigma_{1} - 2\mu_{1}^{\p{1}2},~
\Gamma_{11}^{\p{11}3} = \Gamma_{22}^{\p{221}4} = \frac{1}{2},~
\Gamma_{33}^{\p{33}1} = -4 \mu_{1}^{\p{1}3},~
\Gamma_{33}^{\p{33}3} = 2 \left( \sigma_{1} +\mu_{1}^{\p{1}2} \right),  \nonumber \\
\Gamma_{22}^{\p{22}2} &= -\Gamma_{24}^{\p{24}4} = - \mu_{2},~
\Gamma_{42}^{\p{42}2} = \sigma_{2} + 2\mu_{2}^{\p{2}2},~
\Gamma_{44}^{\p{44}2} = -4 \mu_{2}^{\p{2}3},~
\Gamma_{44}^{\p{44}4} = 2 \left( \sigma_{2} +\mu_{2}^{\p{2}2} \right). 
\end{align}
In contrast to the bivariate Gaussian manifold, the Riemann tensor $\mathsf{R}_{abcd}$ of the independence submanifold has only two non-vanishing independent components:
\begin{equation}\label{eq:riemanntensorindep}
\mathsf{R}_{1313} = - \sigma_{1}^{\p{1}3} ~\text{and}~ \mathsf{R}_{2424} = - \sigma_{2}^{\p{2}3}.
\end{equation}
The Ricci tensor is given by
\begin{equation}\label{eq:riccitensorindep}
\mathsf{Ric}_{ab} = - \left[
\begin{array}{cccc}
\frac{\sigma_{1}}{2} & 0 & \mu_{1}\sigma_{1} & 0  \\
0 & \frac{\sigma_{2}}{2} & 0 & \mu_{2}\sigma_{2}  \\
\mu_{1}\sigma_{1} & 0 & \sigma_{1} \left( 2 \mu_{1}^{\p{1}2} + \sigma_{1} \right)   & 0  \\
0 & \mu_{2}\sigma_{2} & 0 & \sigma_{2} \left( 2 \mu_{2}^{\p{2}2} + \sigma_{2} \right)
\end{array}{}
\right],
\end{equation}
while the scalar curvature $\mathsf{scal} = -2$.  Lastly, the sectional curvature is given by:
\begin{equation}\label{eq:sectionalcurvindep}
\mathsf{k} = - \frac{1}{2} \left[
\begin{array}{cccc}
0 & 0 & 1 & 0 \\
0 & 0 & 0 & 1 \\
1 & 0 & 0 & 0 \\
0 & 1 & 0 & 0 
\end{array}{}
\right].
\end{equation}
Comparing the metric tensor \eqref{eq:fisherraoindep} with the Ricci tensor \eqref{eq:riccitensorindep} and the scalar curvature, it is evident that the Fischer-Rao metric $g$ is an Einstein metric. We have previously seen (\emph{c.f.} theorem \ref{alt}) that the existence of an Einstein metric in the conformal class implies that the conformal holonomy representation admits a holonomy invariant subspace, that the representation is therefore reducible and the rank of the conformal holonomy group is reduced according to the sign of the scalar curvature of the Einstein metric $g \in c$.  So in contrast to the bivariate Gaussian manifold $(\mathcal{G}, g)$, the independence submanifold $(\mathcal{I}, g)$ has reducible conformal holonomy.\\ \\
As in section \ref{subsection:confholg}, we must establish some facts regarding the geometry and topology of $(\mathcal{I}, g)$ in order to determine its conformal holonomy group.
\begin{proposition}\label{indepsimplyconnected}
$\mathcal{I}$ is simply connected.
\end{proposition}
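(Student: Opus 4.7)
The plan is to mirror the argument used in the proof of Proposition \ref{nsimplyconnected} almost verbatim, exploiting the product structure of $\mathcal{I}$ explicitly indicated by \eqref{eq:productofunivariates}. Since $\mathcal{I}$ is defined by the single equation $\sigma_{12} = 0$, its points are parametrised by the four real numbers $(\mu_{1},\mu_{2},\sigma_{1},\sigma_{2})$ with $\mu_{1},\mu_{2} \in \mathbb{R}$ and $\sigma_{1},\sigma_{2} \in \mathbb{R}^{+}$. First I would observe that this identification furnishes a homeomorphism
\begin{equation*}
\mathcal{I} \cong \mathbb{R}^{2} \times \left(\mathbb{R}^{+}\right)^{2},
\end{equation*}
equivalently realising $\mathcal{I}$ as the direct product of the two univariate Gaussian manifolds in \eqref{eq:productofunivariates}.

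Next I would note that each factor is convex and hence contractible: $\mathbb{R}^{2}$ is contractible trivially, and for $A, B \in \left(\mathbb{R}^{+}\right)^{2}$ the segment $(1-t)A + tB$ stays in $\left(\mathbb{R}^{+}\right)^{2}$ for all $t \in [0,1]$, so $\left(\mathbb{R}^{+}\right)^{2}$ is convex and therefore contractible. Contractibility yields $\pi_{1}(\mathbb{R}^{2}) = 0$ and $\pi_{1}\left(\left(\mathbb{R}^{+}\right)^{2}\right) = 0$.

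Finally, applying the standard fact that the fundamental group of a product is the product of the fundamental groups of the factors, I would conclude
\begin{equation*}
\pi_{1}(\mathcal{I}) \cong \pi_{1}\left(\mathbb{R}^{2}\right) \times \pi_{1}\left(\left(\mathbb{R}^{+}\right)^{2}\right) = 0,
\end{equation*}
so that $\mathcal{I}$ is simply connected as claimed. No step presents any real obstacle; the proof is purely topological and uses nothing beyond contractibility of the defining parameter space, as in the earlier proposition.
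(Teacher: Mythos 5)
Your proof is correct and follows essentially the same route as the paper: decompose $\mathcal{I}$ as a product of contractible (convex) parameter spaces and apply the product formula for $\pi_{1}$, exactly as in the paper's proof, which writes the factors as $\mathbb{R}\times\text{PD}(d,\mathbb{R})$ rather than your equivalent $\mathbb{R}^{2}\times(\mathbb{R}^{+})^{2}$.
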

\begin{proof}
Topologically, $\mathcal{I}$ is a direct product whose factors are univariate Gaussian manifolds \eqref{eq:productofunivariates}, which are homeomorphic to $\mathbb{R} \times \text{PD}(d,\mathbb{R})$. $\mathbb{R}$ is clearly contractible,  so $\pi_{1}(\mathbb{R}) = 0$.  Now, for $A, B \in \text{PD}(d,\mathbb{R})$, $(1-t)A + tB \in \text{PD}(d,\mathbb{R})~ \forall t \in [0,1]$ and so $\text{PD}(d,\mathbb{R})$ is convex and therefore contractible. Consequently, $\pi_{1}(\text{PD}(d,\mathbb{R})) = 0$ and so
\begin{align}
\pi_{1}(\mathcal{I}) &\cong \pi_{1} \big( \mathbb{R} \times \text{PD}(d,\mathbb{R}) \times \mathbb{R} \times \text{PD}(d,\mathbb{R}) \big) \nonumber \\
&\cong \pi_{1} \left( \mathbb{R} \right) \times \pi_{1} \left( \text{PD}(d,\mathbb{R}) \right) \times \pi_{1} \left( \mathbb{R} \right) \times \pi_{1} \left( \text{PD}(d,\mathbb{R}) \right) =0
\end{align}
as claimed.
\end{proof}
\begin{proposition}\label{prop:notconfflatindep}
$\mathcal{I}$ is non-conformally flat.
\end{proposition}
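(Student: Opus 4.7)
The strategy mirrors that of Proposition \ref{prop:confflat}: it suffices to exhibit a single non-vanishing component of the Weyl tensor \eqref{eq:weyltensor} to conclude that $(\mathcal{I},g)$ is not conformally flat. With the dimension set to $n=4$, the coefficients in \eqref{eq:weyltensor} specialise to $\tfrac{1}{n-2} = \tfrac{1}{2}$ and $\tfrac{1}{(n-1)(n-2)} = \tfrac{1}{6}$, and we use the value $\mathsf{scal} = -2$ recorded above.

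The natural component to target is $\mathsf{C}_{1212}$. Inspecting \eqref{eq:riemanntensorindep}, the only non-vanishing independent Riemann components are $\mathsf{R}_{1313}$ and $\mathsf{R}_{2424}$, so $\mathsf{R}_{1212} = 0$ and the Riemann contribution to $\mathsf{C}_{1212}$ drops out entirely. From \eqref{eq:fisherraoindep} and \eqref{eq:riccitensorindep} I read off the relevant entries $g_{11}=\sigma_{1}$, $g_{22}=\sigma_{2}$, $g_{12}=0$, $\mathsf{Ric}_{11}= -\sigma_{1}/2$, $\mathsf{Ric}_{22}= -\sigma_{2}/2$ and $\mathsf{Ric}_{12}=0$. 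Substituting these into \eqref{eq:weyltensor} and simplifying will yield a non-zero multiple of $\sigma_{1}\sigma_{2}$, which is strictly non-zero since $\sigma_{1}, \sigma_{2} \in \mathbb{R}^{+}$. This contradicts the Weyl-flatness condition and so $(\mathcal{I},g)$ is non-conformally flat, as claimed.

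I do not anticipate any meaningful obstacle: the plan is structurally identical to the proof of Proposition \ref{prop:confflat}, and every ingredient -- the Riemann, Ricci and scalar curvature tensors of $(\mathcal{I},g)$ -- has already been tabulated in closed form, so the verification reduces to short, mechanical arithmetic. The only subtlety worth flagging is that one must pick a component whose purely tensorial correction (the Ricci-metric and scalar-metric terms) does not accidentally cancel; the choice $\mathsf{C}_{1212}$ avoids this because the Riemann term is absent while the Ricci-metric cross terms combine constructively rather than destructively.
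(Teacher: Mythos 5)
Your proof is correct and is essentially the same as the paper's: both exhibit a single non-vanishing component of the Weyl tensor, the only difference being your choice of $\mathsf{C}_{1212}$ in place of the paper's $\mathsf{C}_{1234}$. The deferred arithmetic does close, though not quite as you describe --- with the standard decomposition the Ricci term contributes $+\tfrac{1}{2}\sigma_{1}\sigma_{2}$ and the scalar term $-\tfrac{1}{3}\sigma_{1}\sigma_{2}$, so they partially cancel rather than reinforce, but the remainder $\mathsf{C}_{1212}=\tfrac{1}{6}\sigma_{1}\sigma_{2}$ is nowhere zero and the conclusion stands.
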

\begin{proof}
The proof is conceptually identical to that of proposition \ref{prop:confflat}. Without loss of generality, consider the $1234$-component of the $(0,4)$-valent Weyl tensor, $\mathsf{C}_{abcd}$. Substituting the expressions for the Fischer-Rao metric \eqref{eq:fisherraoindep}, Ricci tensor \eqref{eq:riemanntensorindep} and scalar curvature into \eqref{eq:weyltensor} and simplifying the resulting expression yields
\begin{equation}
\mathsf{C}_{1234} = -\frac{7}{3} \sigma_{1}\sigma_{2}\left( 2 \mu_{1}^{\p{1}2} + \sigma_{1}\right)\left( 2 \mu_{2}^{\p{2}2} + \sigma_{2}\right) \neq 0,
\end{equation}
and so $\mathcal{I}$ is non-conformally flat as claimed.
\end{proof}
\begin{proposition}\label{prop:onlyconfeinsteinmetric}
The Fischer-Rao metric $g$ is the only Einstein metric in its conformal class.
\end{proposition}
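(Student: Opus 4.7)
The plan is to reduce the Einstein condition for a conformally-related metric $\wh{g} = e^{2\Upsilon}g$ to a trace-free Hessian PDE in the scalar $\sigma := e^{-\Upsilon}$, then exploit the Riemannian product structure of $\mathcal{I}$ together with the Einstein character of each univariate Gaussian factor to force $\sigma$ to be constant.

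First, mirroring the computation of proposition \ref{prop:notconfeinstein}, I would assume towards a contradiction that $\wh{g} = e^{2\Upsilon}g \in [g]$ is an Einstein metric with non-constant $\Upsilon$, and substitute the standard conformal transformations of the Ricci tensor and scalar curvature into $\wh{\mathsf{Ric}}_{ab} = \tfrac{1}{4}\wh{\mathsf{scal}}\,\wh{g}_{ab}$. Because $g$ is itself Einstein with $\mathsf{Ric}_{ab} = -\tfrac{1}{2}g_{ab}$ (as is immediate upon comparing \eqref{eq:fisherraoindep} and \eqref{eq:riccitensorindep}), the Schouten tensor $\mathsf{P}_{ab}$ is pure trace relative to $g$, so the contributions $\mathsf{P}\sigma$ and $\mathsf{J}\sigma$ cancel; the Einstein equation for $\wh{g}$ collapses to the trace-free Hessian condition
\begin{equation*}
\nabla_a\nabla_b\sigma = \phi\, g_{ab}
\end{equation*}
for some scalar $\phi \in C^{\infty}(\mathcal{I})$.

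Next, I would leverage the Riemannian product structure. From \eqref{eq:fisherraoindep} and the given Christoffel symbols, both the metric and the Levi-\v{C}ivit\`{a} connection are block-diagonal with respect to the partition of coordinates $\{\mu_1,\sigma_1\} \sqcup \{\mu_2,\sigma_2\}$; hence for $a$ in one factor and $b$ in the other the PDE reduces to $\partial_a\partial_b\sigma = 0$, forcing $\sigma = s_1(\mu_1,\sigma_1) + s_2(\mu_2,\sigma_2)$. Restricting to each diagonal block yields $\nabla^{(i),2}s_i = \phi\, g^{(i)}$, and taking the $g^{(i)}$-trace shows $\phi = \tfrac{1}{2}\Delta_i s_i$ depends only on the coordinates of the $i$-th factor. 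The intersection of these two conditions is the constants, so $\phi$ is a global constant, say $c$.

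Finally, I would show that $\nabla^{(i),2}s_i = c\, g^{(i)}$ with $c$ constant forces each $s_i$ to be constant. Differentiating once more gives $\nabla_d\nabla_a\nabla_b s_i = 0$; applying the Ricci identity then produces $R^{(i)}_{dabc}\nabla^c s_i = 0$, and contracting to the Ricci tensor together with $\mathsf{Ric}^{(i)} = -\tfrac{1}{2}g^{(i)}$ yields $\nabla s_i = 0$ on the connected factor. Consequently $\sigma$, and therefore $\Upsilon$, is constant and $\wh{g}$ is merely a homothetic rescaling of $g$, contradicting the non-constancy of $\Upsilon$. The main obstacle will be the initial algebraic reduction to the trace-free Hessian equation, which depends crucially on the Einstein property of $g$ so that the $\mathsf{P}\sigma$ and $\mathsf{J}\sigma$ contributions cancel; after that, the product structure and curvature-identity arguments are essentially bookkeeping.
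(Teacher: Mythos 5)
Your proposal is correct, but it takes a genuinely different route from the paper. The paper disposes of this in two lines by citing Brinkmann's theorem --- a conformal map between Einstein metrics with non-constant conformal factor forces constant sectional curvature --- and then pointing to the computed sectional curvature \eqref{eq:sectionalcurvindep}, which is non-constant. You instead reprove the relevant special case of Brinkmann's result by hand: the reduction of the Einstein condition for $\wh{g}=\sigma^{-2}g$ to the concircularity equation $\nabla_a\nabla_b\sigma=\phi\,g_{ab}$ is the standard almost-Einstein equation with the trace-free Schouten contribution killed by the Einstein property of $g$; the block-diagonality of $g$ and $\Gamma$ with respect to $\{\mu_1,\sigma_1\}\sqcup\{\mu_2,\sigma_2\}$ does give $\partial_a\partial_b\sigma=0$ on mixed indices and hence the splitting $\sigma=s_1+s_2$; the two trace identities $\phi=\tfrac12\Delta_i s_i$ do force $\phi$ constant on the connected product; and the Ricci identity applied to the parallel tensor $\nabla\nabla s_i=c\,g^{(i)}$ yields $\mathsf{Ric}^{(i)}{}_a{}^c\nabla_c s_i=0$, which kills $\nabla s_i$ because $\mathsf{Ric}^{(i)}=-\tfrac12 g^{(i)}$ is non-degenerate. (The last step matters: each factor is a hyperbolic plane and \emph{does} carry non-constant solutions of $\nabla\nabla s=\phi g$ with $\phi$ non-constant, e.g. $s=\cosh r$, so your observation that $\phi$ must be a \emph{constant} is the load-bearing point.) What each approach buys: the paper's argument is shorter and rests on a classical citation plus the sectional-curvature computation; yours is self-contained, needs only the Christoffel symbols and the Einstein property, and generalizes directly to any simply connected Einstein product whose factors have non-degenerate Ricci. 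One cosmetic remark applying to both proofs: uniqueness is really uniqueness up to constant (homothetic) rescaling, which is all that is needed for the holonomy argument that follows.
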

\begin{proof}
It follows from Brinkmann's celebrated results on conformal maps between Einstein spaces that if a Riemannian Einstein $4$-manifold $(M, g)$ admits a conformal transformation $g \longmapsto \wh{g} = e^{2\Upsilon}g $ where $\wh{g}$ is also Einstein, then $(M,g)$ has constant sectional curvature \cite{brinkmann} (see also \cite{kuhnelrademacher}).  By equation \eqref{eq:sectionalcurvindep}, the sectional curvature $\mathsf{k}$ is non-constant, and so $g$ is the only Einstein metric in the conformal class as claimed.
\end{proof}
\begin{theorem}
$\text{Hol}(\mathcal{I}, [g]) = SO^{0}(1,4)$.
\end{theorem}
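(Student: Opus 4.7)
The plan is to observe that $g$ is Einstein with negative scalar curvature, use Theorem \ref{alt} to reduce the action of $\text{Hol}(\mathcal{I}, [g])$ to the orthogonal complement $V^{\perp} \cong \mathbb{R}^{1,4}$ of the resulting invariant line, then show this restricted action is irreducible and invoke Theorem \ref{discalaolmos}.

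First, comparing the Fischer-Rao metric \eqref{eq:fisherraoindep} with the Ricci tensor \eqref{eq:riccitensorindep} gives $\mathsf{Ric}_{ab} = -\tfrac{1}{2}g_{ab}$, so $g$ is Einstein with $\mathsf{scal}(g) = -2 < 0$. By Theorem \ref{alt} this produces a holonomy-invariant spacelike line $\mathbb{R}V \subset \mathcal{T}_{x} \cong \mathbb{R}^{1,5}$ (with $\la V,V\ra > 0$), and by Corollary \ref{baumholinv} the orthogonal complement $V^{\perp}$ of signature $(1,4)$ is likewise holonomy invariant. Simple connectivity of $\mathcal{I}$ (Proposition \ref{indepsimplyconnected}) implies that $\text{Hol}(\mathcal{I},[g])$ is connected and therefore fixes $V$ pointwise, so the group is determined by its restriction to $V^{\perp}$, a connected subgroup of $SO(V^{\perp}) \cong SO(1,4)$.

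To establish irreducibility of the restricted action, I would reprise the trichotomy from the proof of Theorem \ref{thm:mainresult1}, but with $V^{\perp}$ in place of $\mathcal{T}_{x}$. Let $W \subsetneq V^{\perp}$ be a hypothetical proper non-trivial $\text{Hol}(\mathcal{I},[g])$-invariant subspace. If $W$ is degenerate, Corollary \ref{thm-degeneratesubspaces} supplies a totally isotropic invariant subspace $W \cap W^{\perp}$, necessarily a null line in signature $(1,4)$; this would produce, via Theorem \ref{alt}, a scalar-flat Einstein metric in $[g]$, contradicting both the uniqueness of $g$ as the sole Einstein metric (Proposition \ref{prop:onlyconfeinsteinmetric}) and $\mathsf{scal}(g) \neq 0$. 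If $\dim W = 1$ and $W$ is non-degenerate, Theorem \ref{alt} attaches to $W$ an Einstein metric in $[g]$ distinct from $g$ (since $\mathbb{R}V \not\subset V^{\perp}$), again contradicting Proposition \ref{prop:onlyconfeinsteinmetric}. The case $\dim W = 4$ reduces to this via the $1$-dimensional orthogonal complement of $W$ inside $V^{\perp}$.

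The main obstacle is the intermediate-dimensional case, $\dim W \in \{2,3\}$ with $W$ non-degenerate. Here Theorem \ref{einsteinproductifinvsubspace} applied with $k = \dim W$ provides, on a dense open subset of $\mathcal{I}$, a representative $\wh{g} \in [g]$ locally isometric to an Einstein product of dimensions $(k-1, 5-k)$. I would close the argument by exploiting that every such Einstein product is Weyl-flat: in the $(2,2)$ case the relation $\mathsf{scal}(g_{1}) = -\mathsf{scal}(g_{2})$ forced by Theorem \ref{einsteinproductifinvsubspace} makes the two $2$-dimensional factors have opposite constant Gaussian curvatures, and a direct computation yields $\mathsf{C} \equiv 0$; in the $(1,3)$ and $(3,1)$ cases the $3$-dimensional factor is automatically of constant sectional curvature (Einstein plus vanishing Weyl in dimension $3$), and a similar computation again gives $\mathsf{C} \equiv 0$. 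By conformal invariance of the Weyl tensor, $\mathcal{I}$ would then be Weyl-flat on a dense open subset, hence identically so by smoothness of $\mathsf{C}$, contradicting Proposition \ref{prop:notconfflatindep}. With all invariant subspaces ruled out, $\text{Hol}(\mathcal{I},[g])$ acts irreducibly on $V^{\perp} \cong \mathbb{R}^{1,4}$, and Theorem \ref{discalaolmos} then yields $\text{Hol}(\mathcal{I},[g]) = SO^{0}(1,4)$.
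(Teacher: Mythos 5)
Your proof is correct, and it departs from the paper's in two genuine ways. First, the endgame: the paper, having established conformal indecomposability, passes through the cone construction (Theorem \ref{feffermangraham}) to place $\text{Hol}(\mathcal{I},[g])$ inside $SO(1,5)$ and then quotes Armstrong's classification (Theorem \ref{armstrong}) for scalar-negative conformally indecomposable Einstein spaces; you instead note that the connected holonomy group fixes the spacelike parallel tractor $V$ pointwise, restrict to $V^{\perp}\cong\mathbb{R}^{1,4}$, prove irreducibility there by hand, and invoke di Scala--Olmos directly, mirroring Theorem \ref{thm:mainresult1}. This is self-contained and avoids Armstrong's theorem entirely, at the cost of rerunning the full trichotomy of invariant subspaces; do note that Theorem \ref{discalaolmos} is stated for $\mathbb{R}^{1,n+1}$ with $n$ the manifold dimension, so you are implicitly using that it is really a general fact about connected irreducible subgroups of $SO(1,m)$ --- worth a sentence. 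Second, and more substantively, your closure of the case $\dim W\in\{2,3\}$ is different from and stronger than the paper's: the paper argues that the Ricci tensor \eqref{eq:riccitensorindep} is not block diagonal so $g$ is not isometric to a product, which is delicate because $(\mathcal{I},g)$ \emph{is} globally a Riemannian product of two univariate Gaussian (hyperbolic) factors --- the metric \eqref{eq:fisherraoindep} and curvature \eqref{eq:riemanntensorindep} become block diagonal after reordering the coordinates as $(1,3,2,4)$ --- and because Theorem \ref{einsteinproductifinvsubspace} only requires \emph{some} local representative of the conformal class to be a product. Your argument instead exploits the curvature normalisation $\mathsf{scal}(g_{1})=-\mathsf{scal}(g_{2})\neq 0$ in the $(2,2)$ case (and the constant curvature of the $3$-dimensional Einstein factor in the $(1,3)$ case) to conclude that any product arising from conformal decomposability would be conformally flat, contradicting Proposition \ref{prop:notconfflatindep}. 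This correctly separates metric products from the curvature-balanced products that conformal holonomy reduction actually produces, and is the more robust way to finish this case.
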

\begin{proof}
The proof is largely analogous to that of theorem \ref{thm:mainresult1}, but in this instance, the existence of an Einstein metric in the conformal class implies that $\mathcal{T}_{x}$ admits a subrepresentation: a non-degenerate, proper $\text{Hol}(\mathcal{I}, [g])$-invariant line $L^{1}$ with positive square norm. Suppose towards a contradiction that in addition to $L^{1}$, $\mathcal{T}_{x}$ admits a degenerate, proper $\text{Hol}(\mathcal{I}, [g])$-invariant subspace, $Z$.  Then by corollary \ref{thm-degeneratesubspaces}, $Z \cap Z^{\perp}$ is a null $\text{Hol}(\mathcal{I}, [g])$-invariant line and so by theorem \ref{alt} there exists a scalar-flat Einstein metric on an open, dense subset of $(\mathcal{I}, [g])$.  However, $g \in [g]$ is the only Einstein metric in the conformal class by proposition \ref{prop:onlyconfeinsteinmetric}, so we arrive at a contradiction and conclude that $\mathcal{T}_{x}$ admits no degenerate, proper $\text{Hol}(\mathcal{I}, [g])$-invariant subspaces. By the same reasoning, $\mathcal{T}_{x}$ cannot admit any non-degenerate, proper $\text{Hol}(\mathcal{I}, [g])$-invariant lines (and their $5$-dimensional holonomy invariant orthogonal complements) besides $L^{1}$.  Suppose towards a further contradiction that the conformal holonomy representation preserves a $k$-dimensional non-degenerate $\text{Hol}_{x}(\mathcal{I}, [g])$-invariant subspace $V^{k} \subset \mathcal{T}_{x}$, where $2 \leq k \leq 4$.  Then, by theorem \ref{einsteinproductifinvsubspace} any point of an open dense subset $\mathcal{D} \subset \mathcal{I}$ has a neighbourhood $U(y)$ with a metric $\wt{g} \in [g]|_{U(y)}$ such that $(U(y),\wt{g})$ is isometric to a product $(N_{1}, g_{1}) \times (N_{2}, g_{2})$, where $(N_{i}, g_{i})$ are Einstein spaces of dimension $k-1$ and $n-(k-1)$, respectively.  But the Ricci tensor \eqref{eq:riccitensorindep} is not block diagonal, so $g$ is not isometric to a product metric and thus $(\mathcal{I}, [g])$ admits no $k$-dimensional non-degenerate $\text{Hol}_{x}(\mathcal{I}, [g])$-invariant subspaces for $2 \leq k \leq 4$.  It follow by definition, then, that $(\mathcal{I}, [g])$ is conformally indecomposable. Now, evaluating the expression for the cone metric \eqref{eq:conemetric}, the signature of $g_{C} = (1,5)$ since $\mathsf{scal}(g) < 0$.  $(\mathcal{I}, g)$ is simply connected and thus orientable, and since $(\mathcal{I}, g)$ and $\mathbb{R}^{+}$ are orientable manifolds, the metric cone $C((\mathcal{I}, g)) = (\mathcal{I}, g) \times \mathbb{R}^{+}$ is likewise orientable.  By Berger's classification \cite{berger}, metric holonomies of orientable semi-Riemannian manifolds of signature $(p,q)$ are contained in $SO(p,q)$, and so by \eqref{eq:confholmathcesconehol}, $\text{Hol}(\mathcal{I}, [g]) \subset SO(1,5)$.  Taking all these facts together, we have shown that $(\mathcal{I}, [g])$ is a $4$-dimensional simply connected, conformally indecomposable, scalar-negative Riemannian Einstein space with $\text{Hol}(\mathcal{I}, [g]) \subset SO(1,5)$, so Armstrong's classification of Riemannian conformal holonomy \ref{armstrong} applies and $\text{Hol}(\mathcal{I}, [g]) = SO^{0}(1,4)$ as claimed.
\end{proof}


\end{document}